\renewcommand{\d}{\partial}
\def\d{\Omega}
\def\du#1#2#3{\overset{#3}{\underset{#2}{#1}}}
\def\Forall{\quad \hbox{ for all }}
\def\M{{\mathcal{M}}}
\def\B{{\mathcal{B}}}
\newcommand{\tn}[1]{\lVert\kern-1pt\lvert{#1}\rvert\kern-1pt\rVert}
\def\<{{\langle}}
\def\>{{\rangle}}
\def\Forall{\quad \hbox{ for all }}
\def\d{\Omega}
\def\Forall{\quad \hbox{ for all }}
\def\d{\Omega}
\def\Forall{\quad \hbox{ for all }}
\def\tb#1{{\|\kern-1pt| #1 \|\kern-1pt|}}
\def\nm2#1#2{\|#1\|_{2,\d_{#2}}}
\def\R{\mathbb{R}}
 \theoremstyle{plain}
 \newtheorem{thm}{Theorem}[section]
 \numberwithin{equation}{section} 
 \numberwithin{figure}{section} 
 \theoremstyle{plain}
 \newtheorem{prop}[thm]{Proposition}
 \theoremstyle{plain}
 \theoremstyle{plain}
 \newtheorem{theorem}[thm]{Theorem}
 \theoremstyle{plain}
\theoremstyle{plain}
 \theoremstyle{plain}
\def\M{{\mathcal{M}}}
\def\d{{\Omega}}
\def\Forall{\quad \hbox{ for all }}
\def\<{{\langle}}
\def\>{{\rangle}}
\def\R{\mathbb{R}}
\def\du#1#2#3{\overset{#3}{\underset{#2}{#1}}}
\begin{document}

\title[Analysis for Mixed FE Discretization for Convection-Diffusion]
{Results on a Mixed Finite Element Approach for a Model Convection-Diffusion Problem}
\author{Constantin Bacuta}
\address{University of Delaware,
Mathematical Sciences,
501 Ewing Hall, 19716}
\email{bacuta@udel.edu}

\author{Daniel Hayes}
\address{University of Delaware,
Department of Mathematics,
501 Ewing Hall, 19716}
\email{dphayes@udel.edu}

\author{Tyler O'Grady}
\address{University of Delaware,
Department of Mathematics,
501 Ewing Hall, 19716}
\email{togrady@udel.edu}

\keywords{least squares, saddle point systems, mixed methods,  optimal stability norm, convection dominated problem}

\subjclass[2000]{74S05, 74B05, 65N22, 65N55}
\thanks{The work was supported  by NSF-DMS 2011615}%

\begin{abstract}

We consider  a model convection-diffusion problem  and \\ present our recent numerical and analysis results regarding mixed finite element formulation and discretization in the singular perturbed case when the convection term dominates the problem. Using  the concepts of optimal norm and saddle point reformulation, we found new error  estimates  for the case of uniform meshes.  We compare  the standard linear Galerkin discretization to a saddle point least square discretization that uses quadratic test functions, and explain the non-physical oscillations of the discrete solutions. We also relate a known upwinding Petrov-Galerkin method and  the stream-line diffusion discretization method, by emphasizing the resulting  linear  systems and by comparing appropriate error norms.  The results can be extended to the multidimensional case in order to find efficient approximations for more general singular perturbed problems including convection dominated models. 

\end{abstract}
\maketitle

\section{Introduction}


We start with the model of a  singularly perturbed  convection diffusion problem: Find $u=u(x)$  on $[0, 1]$ such that
\begin{equation}\label{eq:1d-model}
\begin{cases}-\varepsilon u''(x)+u'(x)=f(x),& 0<x<1\\ u(0)=0, \ u(1)=0,\end{cases}
\end{equation} 
in  the convection dominated case, i.e.  $\varepsilon \ll 1$. Here, the function $f$ is given and assumed to be square integrable on $[0, 1]$. We  will use the following  notation:
\[ 
\begin{aligned}
a_0(u, v) & = \int_0^1 u'(x) v'(x) \, dx, \ (f, v) = \int_0^1  f(x) v(x) \, dx,\ \text{and}\\
b(v, u)& =\varepsilon\, a_0(u, v)+(u',v)  \ \text{for all} \ u,v \in V:=H^{1}_0(0,1).
\end{aligned}
 \]
A variational formulation of \eqref{eq:1d-model} is : Find $u \in V:= H_0^1(0,1)$ such that
 \begin{equation}\label{VF1d}
b(v,u) = (f, v), \ \text{for all} \ v \in V.
\end{equation}
The discretization of \eqref{VF1d}, and its multi-dimensional  variants arise  when solving  practical PDE models such as  heat transfer problems in thin domains, as well as when using small step sizes in implicit time discretizations of parabolic convection diffusion type problems, \cite{Lin-Stynes12}. The solutions to these problems are characterized by  boundary layers,  see e.g., 
\cite{Dahmen-Welper-Cohen12, linssT10, Roos-Schopf15}. 
Approximating such solutions poses  numerical  challenges due to the $\varepsilon$-dependence of both the  error estimates  and of the  stability constants. 
The goal of the paper is to investigate  finite element discretization of a model convection diffusion problem that proved  to be a challenging problem for the last  few  decades, see e.g., \cite{ Barrett-Morton81,  Dahmen-Welper-Cohen12, Roos-Schopf15, Chan-Heuer-Bui-Demkowicz14}. The focus is  on  analysis  of the variational problem that is written in a  mixed formulation  and  ledas to new stability and approximation results.  To improve the rate of convergence in particular norms, we will use the concept of optimal norm, see e.g., \cite{Broersen-StevensonDPGcd14, Chan-Heuer-Bui-Demkowicz14, Dahmen-Welper-Cohen12, demkowicz-gopalakrishnanDPG10, dem-fuh-heu-tia19,J5onDPG, Barrett-Morton81, BHJ, BHJ22}, that provides 
 $\varepsilon$-independent stability. In addition,  we will take advantage of the  mixed  reformulations of the  variational problem given by the Saddle Point Least Squares (SPLS) method, as presented in  \cite{BJ-nc, BJprec, BJ-AA, BQ15}. The ideas, concepts, and methods we present here, can be extended  to the multidimensional case, leading to new and efficient finite element discretizations for convection dominated problems. 


The SPLS approach  uses an auxiliary variable that represents the residual of the original variational formulation on the test space and adds another simple equation involving the residual variable. The method  leads to a square symmetric saddle point system that is more suitable for analysis and discretization. 
The SPLS method was used succesfully for more general boundary value problems  problems, see e.g., \cite{BM12,Dahmen-Welper-Cohen12,demkowicz, Barrett-Morton81}.  Many of the aspects  regarding  SPLS formulation  are common to both the DPG approach \cite{ bouma-gopalakrisnan-harb2014,car-dem-gop16,demkowicz-gopalakrishnanDPG10, demkowicz2011class, dem-fuh-heu-tia19,J5onDPG}  and the SPLS approach developed in  \cite{BJ-nc, BJprec, BJ-AA, BQ15}. In our  work here, the concept of optimal norms will play a key role in providing a  unified error analysis for a class of  finite element discretizations of convection-diffusion problems. 
 
The paper is organized as follows.  We review the main ideas  of the SPLS approach in an abstract general setting in Section \ref{sec:ReviewLSPP}. In Section \ref{sec:abstract-discretization}, we present the SPLS discretization together with some general  error approximation results. We also prove a new approximation result for the Petrov-Galerkin case when the norms on the continuous and discrete test spaces are different. Section \ref{sec:1d-lin-discrete} reviews and connects four known discretization methods that have  $C^0-P^1$ as a trial space, and are to be analyzed  as mixed methods.  Using  various numerical test, we illustrate and explain the non-physical oscillation phenomena  for the standard and SPLS discretization. In addition, we show  the strong connection between the upwinding Petrov-Galerkin (PG) and  the stream-line diffusion (SD) methods.  Section \ref{sec:stability}, focuses on the study of  the stability and approximability of the mixed  discretizations. Numerical results are  presented in Section \ref{sec:NR}.

\section{The notation and the general SPLS approach}\label{sec:ReviewLSPP}
In this section we present the main ideas and concepts for the  SPLS method for a general  mixed variational formulation.
We follow the  Saddle Point Least Squares (SPLS)  terminology that was introduced in  \cite{BJ-nc, BJprec, BJ-AA, BQ15}. 
\subsection{The abstract variational formulation at the continuous level}  \label{subsec:mh}
We consider an abstract  mixed or Petrov-Galerkin formulation that  generalizes the formulation \eqref{VF1d}:
Find $u \in Q$ such that
 \begin{equation}\label{VFabstract}
b(v,u) = \<F, v\>, \ \text{for all} \ v \in V.
\end{equation}
where $b(\cdot,\cdot)$ is a bilinear form, $Q$ and $V$ are possible different  separable Hilbert spaces and $F$ is a continuous linear functional on $V$.
We assume that the inner products $a_0(\cdot, \cdot)$ and $(\cdot, \cdot)_{{Q}}$ induce the  norms $|\cdot|_V =|\cdot| =a_0(\cdot, \cdot)^{1/2}$ and $\|\cdot\|_{Q}=\|\cdot\|=(\cdot, \cdot)_{Q}^{1/2}$. We denote the dual of $V$ by $V^*$ and the dual pairing on $V^* \times V$ by $\langle \cdot, \cdot \rangle$.  
We assume that $b(\cdot, \cdot)$ is a continuous bilinear form on $V\times Q$ satisfying
 the $\sup-\sup$ condition
 \begin{equation} \label{sup-sup_a}
\du{\sup}{u \in Q}{} \ \du {\sup} {v \in V}{} \ \frac {b(v, u)}{|v|\,\|u\|} =M <\infty, 
\end{equation} 
and the $\inf-\sup$ condition
 \begin{equation} \label{inf-sup_a}
 \du{\inf}{u \in Q}{} \ \du {\sup} {v \in V}{} \ \frac {b(v, u)}{|v|\,\|u\|} =m>0.
\end{equation}

With the form $b$, we associate the operators $\B:V\to {Q}$    defined by  
\[
(\B v,q)_{{Q}}=b(v, q) \,  \Forall  v \in V, q \in Q.
\]
We define $V_0$ to be the kernel of $\B$, i.e.,
\[ 
V_0 :=Ker(\B)= \{v \in V |\  \B v=0\}.
\]
Under assumptions \eqref{sup-sup_a}  and \eqref{inf-sup_a}, the operator $\B$ is a bounded surjective operator from $V$ to $Q$, and $V_0$ is a closed subspace of $V$. We will also assume that the  data $F \in V^*$ satisfies the {\it compatibility condition} 
\begin{equation}\label{eq:BBsuf}
\<F,v\> =0 \Forall v \in V_0=Ker(\B). 
\end{equation}
  
The following result describes the well posedness  of  \eqref{VFabstract} and can be used at the continuous  and discrete levels, see e.g.  \cite{A-B, B09, boffi-brezzi-demkowicz-duran-falk-fortin2006, boffi-brezzi-fortin}. 
\begin{prop} \label{prop:well4mixed} If the form $b(\cdot,\cdot)$ satisfies \eqref{sup-sup_a} and \eqref{inf-sup_a}, and the  data $F \in V^*$ satisfies the {\it compatibility condition}  \eqref{eq:BBsuf}, then  the problem \eqref{VFabstract} has  unique solution that depends continuously on the data $F$. 
\end{prop}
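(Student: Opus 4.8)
The plan is to recognize Proposition~\ref{prop:well4mixed} as the standard Babu\v{s}ka--Ne\v{c}as well-posedness statement for a Petrov--Galerkin problem and to deduce it from the properties of the operator $\B$ that were already established in the text. Recall that, by the discussion preceding the statement, $\B\colon V\to Q$ is bounded, surjective, and $V_0=Ker(\B)$ is closed; moreover the $\inf$-$\sup$ condition \eqref{inf-sup_a} says precisely that the restriction of $\B$ to the orthogonal complement $V_0^\perp$ is bounded below by $m$. The first step is to reformulate \eqref{VFabstract} operator-theoretically: identifying $F\in V^*$ with its Riesz representative $R_V F\in V$ via $a_0(R_VF,v)=\<F,v\>$, and noting that $b(v,u)=(\B v,u)_Q=(v,\B^*u)_V$ where $\B^*\colon Q\to V$ is the adjoint, the problem becomes: find $u\in Q$ with $(\B^* u, v)_V=(R_VF,v)_V$ for all $v\in V$, i.e. $\B^* u = R_V F$.

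The second step is existence. Since $\B$ is surjective with closed range, $\B^*$ is injective with closed range, and in fact $\B^*$ is an isomorphism from $Q$ onto $V_0^\perp=\overline{\mathrm{Range}(\B^*)}$, with $\|\B^* u\|_V\ge m\|u\|_Q$ (this is the dual form of \eqref{inf-sup_a}). The compatibility condition \eqref{eq:BBsuf}, $\<F,v\>=0$ for all $v\in V_0$, says exactly that $R_VF\in V_0^\perp$; hence $R_VF$ lies in the range of $\B^*$, and there is $u\in Q$ with $\B^*u=R_VF$. The third step is uniqueness and continuous dependence: if $\B^*u=0$ then the lower bound $m\|u\|_Q\le\|\B^*u\|_V=0$ forces $u=0$, so the solution is unique; and for the general solution, $m\|u\|_Q\le\|\B^*u\|_V=\|R_VF\|_V=\|F\|_{V^*}$, which gives $\|u\|_Q\le \tfrac1m\|F\|_{V^*}$, the desired stability estimate with the solution operator $F\mapsto u$ bounded by $1/m$.

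I would then package these three steps, perhaps citing \cite{boffi-brezzi-fortin} or \cite{A-B} for the equivalence of \eqref{inf-sup_a} with the closed-range/lower-bound property of $\B^*$, so the proof reduces to: (i) translate to the equation $\B^*u=R_VF$; (ii) use $\inf$-$\sup$ $+$ surjectivity to get that $\B^*$ is an isomorphism onto $V_0^\perp$; (iii) use compatibility to place $R_VF$ in that subspace and read off existence, uniqueness, and the bound $\|u\|\le m^{-1}\|F\|_{V^*}$. The only genuinely delicate point — and the step I would be most careful about — is the passage from the $\inf$-$\sup$ condition \eqref{inf-sup_a}, which is stated as a supremum over all of $V$, to the bound $\|\B^*u\|_V\ge m\|u\|_Q$: one must observe that $\sup_{v\in V}\frac{b(v,u)}{|v|}=\sup_{v\in V}\frac{(v,\B^*u)_V}{|v|}=\|\B^*u\|_V$ by Cauchy--Schwarz (attained at $v=\B^*u$), after which \eqref{inf-sup_a} is literally the inequality needed. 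Everything else is the classical closed-range argument, and no $\varepsilon$-dependence enters here since the constants $m,M$ are taken as given.
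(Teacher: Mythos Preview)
Your argument is correct and is precisely the classical Babu\v{s}ka--Ne\v{c}as proof. Note, however, that the paper does not actually supply a proof of Proposition~\ref{prop:well4mixed}: it merely states the result and cites \cite{A-B, B09, boffi-brezzi-demkowicz-duran-falk-fortin2006, boffi-brezzi-fortin}. What you have written is essentially the argument one finds in those references, so there is nothing to compare against beyond observing that your three-step outline (reformulate as $\B^*u=R_VF$; use \eqref{inf-sup_a} to get that $\B^*$ is an isomorphism onto $V_0^\perp$; use compatibility to place $R_VF$ in that range) is the standard route and is sound.
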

It is also known, see e.g., \cite{BM12, BQ15,BQ17, Dahmen-Welper-Cohen12} that, under the  {\it compatibility condition} \eqref{eq:BBsuf}, solving the mixed problem  \eqref{VFabstract} reduces to  solving a standard saddle point reformulation: Find $(w, u) \in V \times Q$ such that  
\begin{equation}\label{abstract:variational2}
\begin{array}{lclll}
a_0(w,v) & + & b( v, u) &= \langle F,v \rangle &\ \Forall  v \in V,\\
b(w,q) & & & =0   &\  \Forall  q \in Q.  
\end{array}
\end{equation}
In fact,  we have that $u$ is the unique solution of \eqref{VFabstract}  {\it if and only if} $(w=0 , u) $ solves  \eqref{abstract:variational2}, and the result remains valid if the form $a_0(\cdot, \cdot)$ in \eqref{abstract:variational2} is replaced by any other symmetric bilinear form  $a(\cdot, \cdot)$  on $V$ that leads to an equivalent norm on $V$. 




\section{Saddle point least squares discretization}\label{sec:abstract-discretization}

Let $b(\cdot,\cdot):V\times Q\to\R$ be a bilinear form as defined in Section  \ref{sec:ReviewLSPP}. 
Let $V_h\subset V$ and  $\M_h\subset Q$ be finite dimensional approximation spaces.
 We assume the following discrete $\inf-\sup$ condition holds for the pair of spaces $(V_h,\M_h)$:
 \begin{equation} \label{inf-sup_h}
\du{\inf}{u_h \in \M_h}{} \ \du {\sup} {v_h \in V_h}{} \ \frac {b(v_h, u_h)}{|v_h|\,\|u_h\|} =m_h>0.
\end{equation} 
As in the continuous case, we  define 
\[
V_{h,0}:=\{v_h\in V_h\,|\, b(v_h,q_h)=0,\Forall q_h\in \M_h\},
\] 
 and $F_h \in V_h^*$ to be the restriction of $F$ to $V_h$, i.e.,   $\langle F_h, v_h \rangle:=\langle F, v_h \rangle$ for all $v_h \in V_h$. 
In the case $V_{h,0} \subset V_0$, the compatibility condition \eqref{eq:BBsuf} implies the discrete compatibility condition
\[
\langle F,v_h\rangle =0 \Forall v_h\in V_{h,0}.
\]
Hence, under assumption \eqref{inf-sup_h}, the PG  problem of finding $u_h\in \M_h$ such that 
\begin{equation}\label{discrete_var_form}
b(v_h, u_h)=\langle F,v_h\rangle,  \ v_h\in V_h
\end{equation}
has a unique solution. In general, we might not have  $V_{h,0}\subset V_0$. Consequently,  even though the continuous problem \eqref{VFabstract} is well posed, the discrete problem  \eqref{discrete_var_form} might not be  well-posed. However, if the form $b(\cdot, \cdot)$ satisfies \eqref{inf-sup_h}, then the problem of finding $(w_h,u_h) \in V_h\times \M_h$ satisfying  
\begin{equation}\label{discrete:variationalSPP}
\begin{array}{lclll}
a_0(w_h,v_h) & + & b( v_h, u_h) &= \langle f,v_h \rangle &\ \Forall  v_h \in V_h,\\
b(w_h,q_h) & & & =0   &\  \Forall  q_h \in \M_h, 
\end{array}
\end{equation} 
 does have a unique solution. 
We call  the component $u_h$  of the solution $(w_h,u_h)$ 
of \eqref{discrete:variationalSPP} the  {\it saddle point least squares} approximation of the solution $u$ of the original mixed problem \eqref{VFabstract}.

The following error estimate for $\|u-u_h\|$ was proved in \cite{BQ15}. 

 
\begin{theorem}\label{th:sharpEE} 
Let $b:V \times Q \to \R$  satisfy \eqref{sup-sup_a} and \eqref{inf-sup_a} and assume that   ${F}  \in V^*$  is given and satisfies \eqref{eq:BBsuf}. Assume that  $u$  is the  solution  of \eqref{VFabstract} and  $V_h \subset V$,  $ {\M}_h \subset  Q$ are  chosen such that the discrete $\inf-\sup$ condition   \eqref{inf-sup_h} holds. If  $\left (w_h, u_h \right )$ is the  solution  of \eqref{discrete:variationalSPP}, then the following error estimate holds:
\begin{equation}\label{eq:er4LS} 
 \frac 1 M |w_h| \leq \|u-u_h\| \leq  \frac{M}{m_h} \  \du{\inf}{q_h \in\M_h
}{}  \|u-q_h\|.
\end{equation} 
\end{theorem}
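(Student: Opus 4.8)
The plan is to read off both inequalities in \eqref{eq:er4LS} from the error equations produced by comparing the discrete system \eqref{discrete:variationalSPP} with the original problem \eqref{VFabstract}. Since $u$ solves \eqref{VFabstract}, we have $b(v_h,u)=\langle F,v_h\rangle$ for all $v_h\in V_h$; subtracting this from the first equation of \eqref{discrete:variationalSPP} (whose right-hand side is the restriction of $F$ to $V_h$) and reading $w_h\in V_{h,0}$ off the second equation gives
\[
a_0(w_h,v_h)=b(v_h,u-u_h)\Forall v_h\in V_h,\qquad w_h\in V_{h,0}.
\]

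For the left inequality, take $v_h=w_h$ above and use the continuity bound $b(v,q)\le M\,|v|\,\|q\|$ from \eqref{sup-sup_a}: this yields $|w_h|^2=b(w_h,u-u_h)\le M\,|w_h|\,\|u-u_h\|$, hence $\frac1M|w_h|\le\|u-u_h\|$ (the case $w_h=0$ being trivial).

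For the right inequality, split $V_h=V_{h,0}\oplus V_{h,0}^{\perp}$ orthogonally in $a_0$. Since $w_h\in V_{h,0}$, the error equation gives the Galerkin orthogonality $b(v_h,u-u_h)=0$ for all $v_h\in V_{h,0}^{\perp}$; and since $b(v_h,q_h)=0$ when $v_h\in V_{h,0}$, $q_h\in\M_h$, while $|\cdot|$ respects the decomposition, the discrete $\inf-\sup$ condition \eqref{inf-sup_h} restricts to $\sup_{v_h\in V_{h,0}^{\perp}}b(v_h,q_h)/|v_h|\ge m_h\|q_h\|$ for every $q_h\in\M_h$. Next, introduce $\Pi_h:Q\to\M_h$, sending $q$ to the second component of the solution of \eqref{discrete:variationalSPP} with right-hand side $b(\cdot,q)$: by the unique solvability of \eqref{discrete:variationalSPP} (which holds under \eqref{inf-sup_h}) this is a well-defined linear operator, it fixes $\M_h$ pointwise (as $(0,q)$ solves that system when $q\in\M_h$), hence it is a bounded projection of $Q$ onto $\M_h$, and the SPLS approximation is $u_h=\Pi_h u$. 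Repeating the orthogonality argument for general data shows $b(v_h,\Pi_h q)=b(v_h,q)$ for $v_h\in V_{h,0}^{\perp}$, so the restricted $\inf-\sup$ bound together with \eqref{sup-sup_a} gives $m_h\|\Pi_h q\|\le\sup_{v_h\in V_{h,0}^{\perp}}b(v_h,q)/|v_h|\le M\|q\|$, i.e. $\|\Pi_h\|\le M/m_h$. Finally, since $I-\Pi_h$ kills $\M_h$, for every $q_h\in\M_h$ we have $u-u_h=(I-\Pi_h)(u-q_h)$, so $\|u-u_h\|\le\|I-\Pi_h\|\,\|u-q_h\|$, and taking the infimum over $q_h$ finishes the proof once $\|I-\Pi_h\|\le M/m_h$ is established.

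The one step that is not bookkeeping is this last bound on $\|I-\Pi_h\|$: the triangle inequality applied to $u-u_h=(u-q_h)-(u_h-q_h)$ gives only the constant $1+M/m_h$, and a Pythagorean estimate against the $(\cdot,\cdot)_Q$-orthogonal projection onto $\M_h$ gives only $\sqrt{1+(M/m_h)^2}$; to reach exactly $M/m_h$ I would invoke the classical identity $\|I-\Pi_h\|=\|\Pi_h\|$, valid for any bounded idempotent on a Hilbert space other than $0$ and the identity (equivalently, the norm of an oblique projection equals the reciprocal of the sine of the angle between its range and its kernel), and combine it with $\|\Pi_h\|\le M/m_h$ from the previous paragraph. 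The degenerate cases $\M_h=\{0\}$ and $\Pi_h=I$, where $\inf_{q_h\in\M_h}\|u-q_h\|=0$ or the assertion is immediate from $m_h\le M$, are handled separately.
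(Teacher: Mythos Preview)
Your proof is correct. The paper does not actually prove Theorem~\ref{th:sharpEE} in-text---it simply cites \cite{BQ15}---so there is no in-paper argument to compare against directly. That said, your strategy for the right inequality (define the SPLS solution operator $\Pi_h$, verify it is a bounded idempotent on $Q$ with $\|\Pi_h\|\le M/m_h$ via the restricted $\inf$--$\sup$ on $V_{h,0}^{\perp}$, then invoke the Kato/Xu--Zikatanov identity $\|I-\Pi_h\|=\|\Pi_h\|$) is exactly the technique the paper deploys in its proof of the adjacent Theorem~\ref{th:ap-PG}, so your approach is entirely consistent with the paper's methodology. Your treatment of the left inequality (test the error equation with $v_h=w_h$ and apply \eqref{sup-sup_a}) is the standard one-line argument.
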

 
Note that the considerations made so far in this section remain valid  if the form $a_0(\cdot, \cdot)$, as an inner product on $V_h$, is replaced by another inner product $a(\cdot, \cdot)$ which gives rise to an equivalent  norm on $V_h$. 

For the case  $V_{h,0} =\{0\}$, the compatibility condition \eqref{eq:BBsuf} is trivially satisfied and there is no need for an SPLS discretization, unless we want to precondition the  discretization \eqref{discrete_var_form}. Thus, \eqref{discrete_var_form} leads to a square linear system that is the  Petrov-Galerkin discreization of \eqref{VFabstract}. In this case,  we might have a different  norm 
$ \|\cdot \|_*  $ on $Q$  and a different norm $ \|\cdot \|_{*,h}  $ on the discrete trial space $\M_h$. The approximability  Theorem \ref{th:sharpEE} can be adapted in this case to  the following version: 

\begin{theorem}\label{th:ap-PG}
Let $|\cdot|$, $\|\cdot\|=\|\cdot\|_{*}$ and $\|\cdot\|_{*,h}$ be the norms on $V,Q$, and $\M_h$ respectively such that  they satisfy \eqref{sup-sup_a}, \eqref{inf-sup_a}, and \eqref{inf-sup_h}. Assume that for some constant $c_0$ we have
\begin{equation}
\|v\|_* \leq c_0\|v\|_{*,h}\quad\quad\text{for all $v\in Q$}.
\end{equation}
Let $u$ be the solution of \eqref{VFabstract} and let  $u_h$ be  the unique solution of problem \eqref{discrete_var_form}. Then the following error estimate holds:
\begin{equation}\label{eq:Approx-Disc}
\|u-u_h\|_{*,h}\leq c_0\, \frac{M}{m_h}\ \du{\inf}{p_h \in \M_h}{} \ \|u-p_h\|_{*,h}.
\end{equation}
\end{theorem}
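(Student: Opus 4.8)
The plan is to run the classical quasi-optimality (C\'ea--Babu\v{s}ka) argument for the square Petrov--Galerkin scheme \eqref{discrete_var_form}, carrying the two trial norms $\|\cdot\|_*$ and $\|\cdot\|_{*,h}$ along and spending the comparison constant $c_0$ at the single spot where it is needed; this is the proof of Theorem \ref{th:sharpEE} from \cite{BQ15} re-read with mismatched norms. Well-posedness of \eqref{discrete_var_form} is already furnished by \eqref{inf-sup_h}, so I take $u_h$ as given and first record Galerkin orthogonality: since $u$ solves \eqref{VFabstract}, $u_h$ solves \eqref{discrete_var_form}, and $V_h\subset V$, we have $b(v_h,u-u_h)=\<F,v_h\>-\<F,v_h\>=0$ for all $v_h\in V_h$.

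Next, fix an arbitrary $p_h\in\M_h$. Then $u_h-p_h\in\M_h$, so the discrete $\inf-\sup$ condition \eqref{inf-sup_h} (with $\M_h$ carrying the norm $\|\cdot\|_{*,h}$) gives
\[
m_h\,\|u_h-p_h\|_{*,h}\ \le\ \du{\sup}{v_h\in V_h}{}\ \frac{b(v_h,u_h-p_h)}{|v_h|}\,.
\]
By orthogonality, $b(v_h,u_h-p_h)=b(v_h,u_h-u)+b(v_h,u-p_h)=b(v_h,u-p_h)$, and the continuity estimate \eqref{sup-sup_a} written in the norm $\|\cdot\|_*$, followed by the hypothesis $\|v\|_*\le c_0\|v\|_{*,h}$, yields $b(v_h,u-p_h)\le M|v_h|\,\|u-p_h\|_*\le c_0M|v_h|\,\|u-p_h\|_{*,h}$. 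Hence $\|u_h-p_h\|_{*,h}\le\frac{c_0M}{m_h}\|u-p_h\|_{*,h}$ for every $p_h\in\M_h$; a plain triangle inequality already gives the weakened estimate $\|u-u_h\|_{*,h}\le\bigl(1+\frac{c_0M}{m_h}\bigr)\du{\inf}{p_h\in\M_h}{}\|u-p_h\|_{*,h}$.

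To obtain \eqref{eq:Approx-Disc} with the stated constant (no additive $1$), I would, following \cite{BQ15}, work with the Petrov--Galerkin projection $P_h:Q\to\M_h$ sending $q$ to the solution of \eqref{discrete_var_form} with data $b(\cdot,q)$: it is linear, has range $\M_h$, and fixes $\M_h$ pointwise, so $P_h^2=P_h$. Running the previous paragraph with $p_h=0$ and $u$ replaced by an arbitrary $q\in Q$ gives $\|P_h\|\le\frac{c_0M}{m_h}$ on $(Q,\|\cdot\|_{*,h})$, while $u-u_h=(I-P_h)u=(I-P_h)(u-p_h)$ for every $p_h\in\M_h$ because $I-P_h$ annihilates $\M_h$; the identity $\|I-P_h\|=\|P_h\|$ for a nontrivial idempotent, together with the infimum over $p_h$, then closes the argument.

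I expect the only genuinely delicate step to be that last identity $\|I-P_h\|=\|P_h\|$: it needs $\M_h$ to be a proper nonzero subspace of $Q$ (always the case here, and consistent with $\frac{c_0M}{m_h}\ge1$, which follows from $m_h\le c_0M$), it is the classical oblique-projection fact when $\|\cdot\|_{*,h}$ comes from an inner product, and the Xu--Zikatanov lemma for a general norm. The rest is bookkeeping about which of $|\cdot|$, $\|\cdot\|_*$, $\|\cdot\|_{*,h}$ each inequality lives in; note that the hypothesis ``$\|v\|_*\le c_0\|v\|_{*,h}$ for all $v\in Q$'' in particular makes $\|\cdot\|_{*,h}$ a norm on all of $Q$, so $\|u-p_h\|_{*,h}$ is meaningful.
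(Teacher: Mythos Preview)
Your proposal is correct and follows essentially the same route as the paper: define the Petrov--Galerkin projection $P_h$ (the paper calls it $T_h$), bound its operator norm on $(Q,\|\cdot\|_{*,h})$ by $c_0M/m_h$ via the discrete inf--sup, Galerkin orthogonality, the continuity \eqref{sup-sup_a} in $\|\cdot\|_*$, and the comparison $\|\cdot\|_*\le c_0\|\cdot\|_{*,h}$, and then invoke the Kato/Xu--Zikatanov identity $\|I-P_h\|=\|P_h\|$ (the paper cites exactly \cite{kato,xu-zikatanov-BBtheory}) to remove the additive $1$. Your preliminary paragraph deriving the weaker $(1+c_0M/m_h)$ bound is extra but harmless.
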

\begin{proof}
Let $T_h:Q\to Q$ be the operator defined by $T_hu =u_h$ where $b(v_h,u) = b(v_h,u_h)$ for all $v_h\in V_h$. On $Q$ we consider the norm $\|\cdot\|_{*,h}$. By  the uniqueness of the discrete solution to the problem ``Find $\tilde{u}_h\in \M_h$ such that
$$
b(v_h,\tilde{u}_h) = b(v_h,u_h),\quad\quad\text{for all $v_h\in V_h$},
$$
we have that $T_hu_h= u_h$, i.e. $T_h^2 = T_h$. Using that   $\|I-T_h\|_{\mathcal{L}}= \|T_h\|_{\mathcal{L}}$
, where $\|\cdot\|_{\mathcal{L}}=\|\cdot\|_{\mathcal{L}(Q,Q)}$, see \cite{kato, xu-zikatanov-BBtheory}, we get
\begin{align*}
\|u-u_h\|_{*,h} & = \|(I-T_h)u\|_{*,h}
 = \|(I-T_h)(u-q_h)\|_{*,h} \\
& \leq \|I-T_h\|_{\mathcal{L}}\, \|u - p_h\|_{*,h}
= \|T_h\|_{\mathcal{L}}\, \|u-p_h\|_{*,h}
\end{align*}
where $p_h$ is any element of $\M_h$. Thus, we need a bound for $\|T_h\|_{\mathcal{L}}$:
\begin{align*}
\|T_hu\|_{*,h} & \leq \frac{1}{m_h}\inf_{v_h\in V_h}\frac{b(v_h,u_h)}{|v_h|} 
 = \frac{1}{m_h}\inf_{v_h\in V_h}\frac{b(v_h,u)}{|v_h|}\\
& \leq \frac{M}{m_h}\|u\|_{*}  \leq \frac{c_0M}{m_h}\|u\|_{*,h}.
\end{align*}
By combining the last two estimates,  we have:
\begin{equation}\label{eq:approx-proof}
\|u-u_h\|_{*,h}\leq c_0\, \frac{M}{m_h}\|u - p_h\|_{*,h}
\end{equation}
Since $p_h\in \M_h$ was arbitrary, we obtain \eqref{eq:Approx-Disc}.
\end{proof}


\section{Discretization with $C^0-P^1$ trial space for the 1D Convection reaction problem}\label{sec:1d-lin-discrete}
In this section, we review standard finite element discretizations of problem \eqref{eq:1d-model} and emphasize the ways the corresponding linear systems relate. The concepts presented in this section are focused on uniform mesh discretization, but most of the results can be easily  extended to non-uniform meshes. 

We  divide the interval $[0,1]$ into $n$  equal length subintervals using the nodes $0=x_0<x_1<\cdots < x_n=1$ and denote  $h:=x_j - x_{j-1}, j=1, 2, \cdots, n$. For the above uniform distributed notes on $[0, 1]$, we define  the corresponding discrete space  $\M_h$  as  the subspace of $Q = H^1_0(0,1)$, given by
 \[ 
 \M_h = \{ v_h \in V \mid v_h \text{ is linear on each } [x_j, x_{j + 1}]\},
 \]
  i.e., $\M_h$ is the space of all {\it piecewise linear continuous functions} with respect to the given nodes, that are zero at $x=0$ and $x=1$.  We consider the nodal basis $\{ \varphi_j\}_{j = 1}^{n-1}$ with the standard defining property $\varphi_i(x_j ) = \delta_{ij}$. 

\subsection{Standard Linear discretization} \label{sec:LinP1}
We couple the above discrete trial space with a discrete  test space $V_h:=\M_h$.  
 Thus, the standard linear discrete variational formulation of \eqref{VF1d} is: Find $u_h \in \M_h$ such that
 \begin{equation}\label{dVF}
b(v_h, u_h) = (f, v_h), \ \text{for all} \ v_h \in V_h.
\end{equation}
We look for  $u_h \in V_h$ with  the nodal basis expansion
 \[
 u_h := \sum_{i=1}^{n-1} u_i \varphi_i, \ \text{where} \ \ u_i=u_h(x_i).
 \]
If we consider the test functions $v_h=\varphi_j, j=1,2,\cdots,n-1$ in \eqref{dVF}, we  obtain  the following linear system 
 \begin{equation}\label{1d-LS}
 \left (\frac{\varepsilon}{h}  S+ C \right )\, U = F, 
\end{equation}

where \(U,F\in\R^{n-1}\) and \(S, C \in\R^{(n-1)\times(n-1)}\) with:
 \[
 U:=\begin{bmatrix}u_1\\u_2\\\vdots\\u_{n-1}\end{bmatrix},\quad F:=\begin{bmatrix}(f,\varphi_1)\\ (f,\varphi_2)\\\vdots \\ (f,\varphi_{n-1})\end{bmatrix}, \text{and} 
\]
 \[
 S:=\begin{bmatrix}2&-1\\-1&2&-1\\&\ddots&\ddots&\ddots\\&&-1&2&-1\\&&&-1&2\end{bmatrix},\quad 
 C:=\frac{1}{2}\begin{bmatrix}0&1\\-1&0&1\\&\ddots&\ddots&\ddots\\&&-1&0&1\\&&&-1&0\end{bmatrix}.\] 
Note that, by letting   $\varepsilon \to 0$  in \eqref{VF1d}, we obtain the {\it simplified problem}: \\
Find $w \in H_0^1(0,1)$ such that
\begin{equation} \label{VF1d-simplified-w}
(w',v) = (f, v), \ \text{for all} \ v \in V. 
\end{equation}

The problem \eqref{VF1d-simplified-w} has unique solution,  if and only if $\int_0^1  f(x)  \, dx=0$.    For the case $\int_0^1  f(x)  \, dx \neq 0$ we can consider  the  {\it reduced problem}: \\ Find $w \in H^1(0,1)$ such that
 \begin{equation}\label{VF1d-reduced}
w'(x) = f(x)  \ \text{for all} \ x \in (0, 1), \text{and} \ w(0)=0,
\end{equation}
with the unique solution $w(x) = \int_0^x  f(s)  \, ds$. 

The  {\it simplified discrete problem} corresponding to the  finite element discretization \eqref{VF1d-simplified-w} can be written as:  Find $w_h := \sum_{i=1}^{n-1} u_i \varphi_i$, such that 
\begin{equation}\label{lin-reduced}
C\, U = F.
\end{equation}
It is interesting to note that, even though, in general, \eqref{VF1d-simplified-w} is not well posed, the  system \eqref{lin-reduced} decouples into two independent systems:
\begin{equation}\label{sys-even}
\begin{cases} u_2 -u_0 &=2(f, \varphi_1) \\  u_4 -u_2&=2(f, \varphi_3)\\ \vdots \\ 
u_{2m} -u_{2m-2}&=2(f, \varphi_{2m-1}), 
\end{cases}
\end{equation}
and
 \begin{equation}\label{sys-odd}
\begin{cases} u_3 -u_1 &=2(f, \varphi_2) \\  u_5 -u_3&=2(f, \varphi_4)\\ \vdots \\ 
u_{2m+1} -u_{2m-1}&=2(f, \varphi_{2m}),
\end{cases}
\end{equation}
where $u_0=u_n=0$.  In this case, the systems \eqref{sys-even} and \eqref{sys-odd} have  unique solutions and can be  solved, forward and backward respectively, to get 

 \begin{equation}\label{sys-sol-odd}
\begin{cases} u_{2k} &=2 \sum_{j=1}^k (f, \varphi_{2j-1}), \ k=1,2,\cdots,m \\ 
 u_{2m-2k+1} &=-2 \sum_{j=1}^{k} (f, \varphi_{2m-2j+2}), \ k=1,2, \cdots,m
\end{cases}
\end{equation}
For $f=1$ on $[0, 1]$, we have $(f,\varphi_i) =h$ for all $i=1,2, \cdots, 2m$, and 
 \begin{equation}\label{sys-sol-odd-f1}
\begin{cases} u_{2k} &=2kh=x_{2k}, \ k=1,2,\cdots,m \\ 
 u_{2m-2k+1} &=- 2kh =x_{2m-2k+1}-1, \ k=1,2, \cdots,m.
\end{cases}
\end{equation}
Thus, the even components interpolate the  solution of the function $x$, and the odd components interpolate the function $x-1$.
The combined solution leads to a very oscillatory behavior when $n\to \infty$.  For $\varepsilon/h \leq 10^{-4}$, the solution of \eqref{dVF} is very close to the solution of the simplified  system \eqref{lin-reduced}.  A similar oscillatory behavior  is observed for the linear  finite element solution of \eqref{dVF} when using an odd number of subintervals $n$, see Fig.1. 

We note  that, for an arbitrary smooth $f$,  the even components $\{u_{2k}\}$ approximate  the solution $w(x)$ of the Initial Value Problem (IVP)  \eqref{VF1d-reduced}, and the odd components approximate  the function $\theta(x)=w(x)- \int_0^1 f(x)\, dx$, see Fig.1 and Fig.5, where
\begin{equation}\label{VF1d-reduced2}
\theta'(x) = f(x)  \ \text{for all} \ x \in (0, 1), \text{and} \ \theta(1)=0.
\end{equation}
This can be justified  as follows. If we replace in \eqref{sys-even} the values $(f,\varphi_i)$  by   $h\, f(x_i)$ - the corresponding trapezoid rule approximation of the integral, the solution of the modified system coincides with the  mid-point method approximation 
 of the IVP \eqref{VF1d-reduced}, (on the even nodes, $h\to 2h$). Similarly,  the  solution of the modified  system \eqref{sys-odd} obtained by replacing $(f,\varphi_i)$  with  $h\, f(x_i)$ coincides with the mid-point method approximation of  the IVP on odd nodes. 
\vspace{0.1in}

\hspace{-12mm} \parbox{0.55\textwidth}{
\begin{center}
\includegraphics[width=0.55\textwidth]{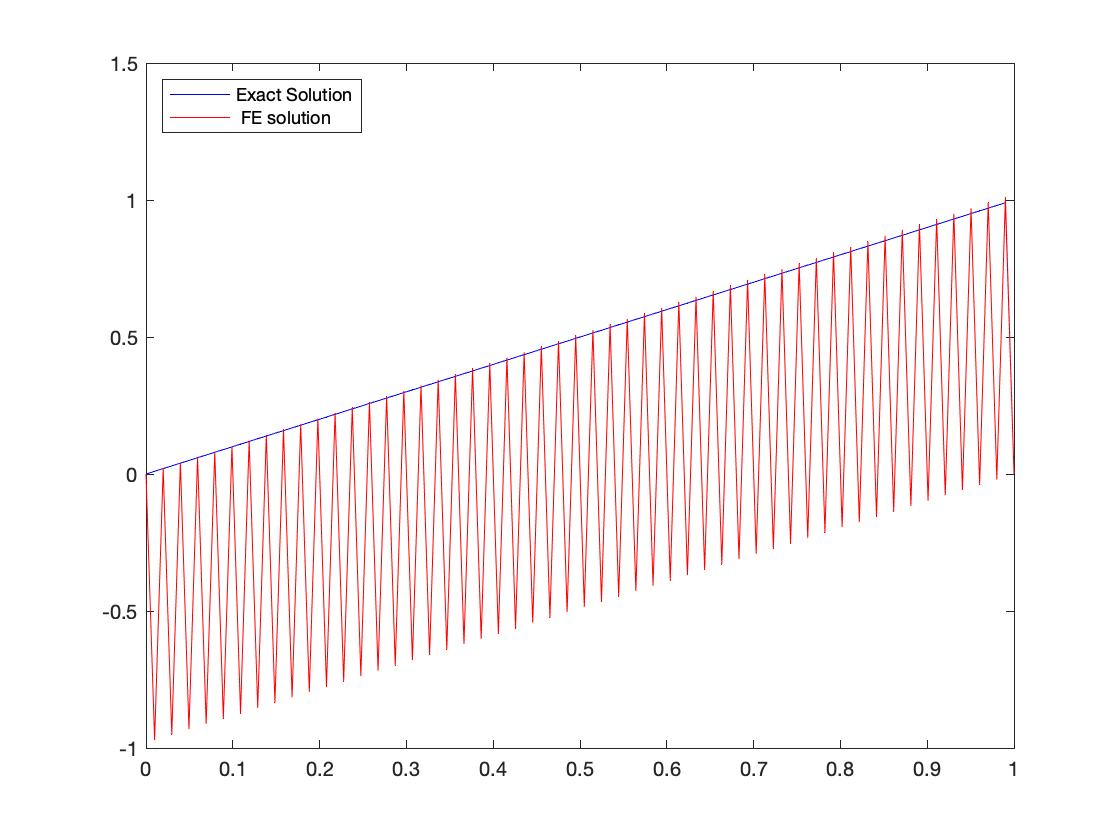}\\
{Fig.1: $f=1, n=101, \varepsilon=10^{-6}$} 
\label{fig:Fig1}
\end{center}
}
\parbox{0.55\textwidth}{
\begin{center}
\includegraphics[width=0.55\textwidth]{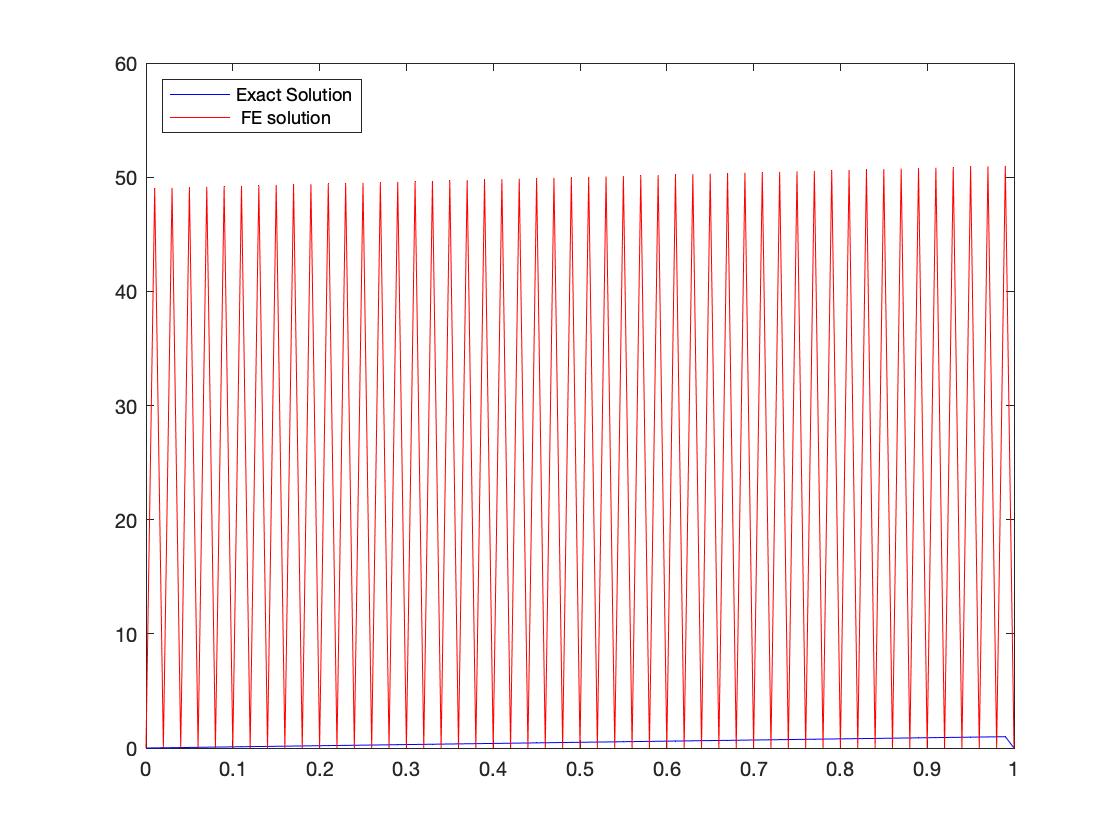}\\
{Fig.2: $f=1, n=102$,  $\varepsilon=10^{-6}$} 
\label{fig:Fig2}
\end{center}
}

\vspace{0.1in}
 
 \hspace{-12mm}\parbox{0.55\textwidth}{
\begin{center}
\includegraphics[width=0.55\textwidth]{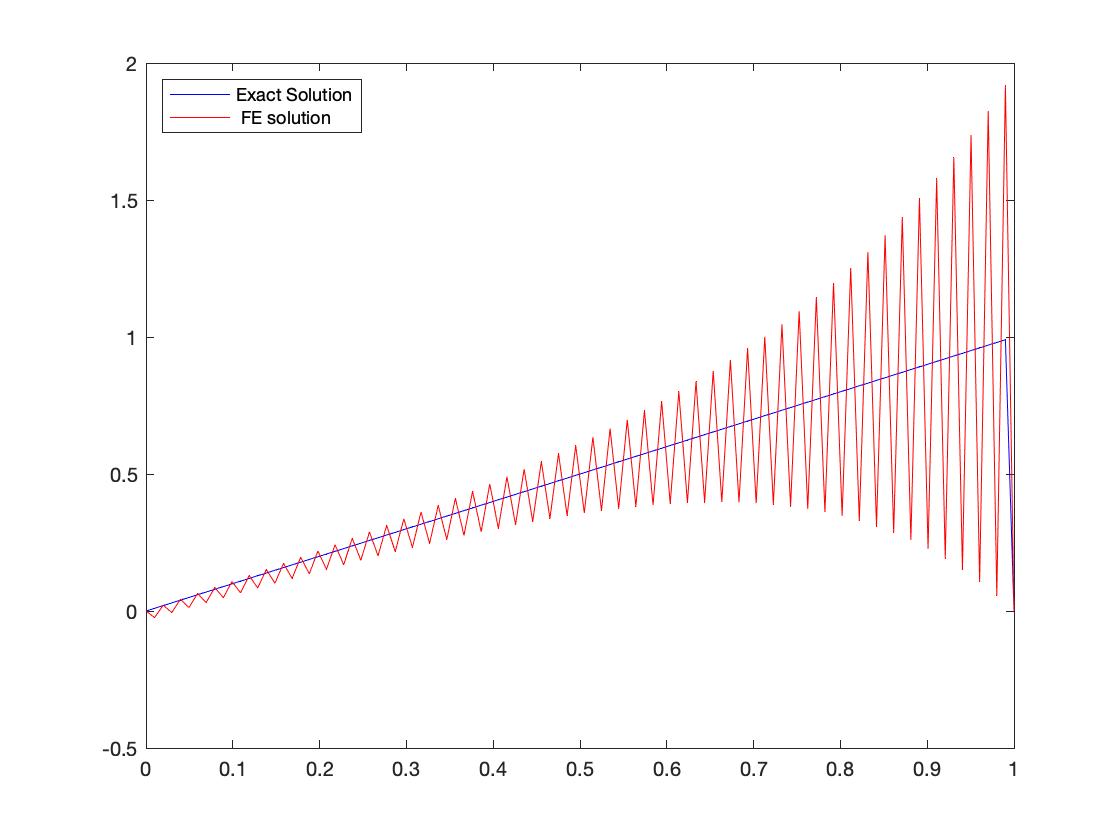}\\
{Fig.3:  $f=1, n=101, \varepsilon=10^{-4}$} 
\end{center}
}
\parbox{0.55\textwidth}{
\begin{center}
\includegraphics[width=0.55\textwidth]{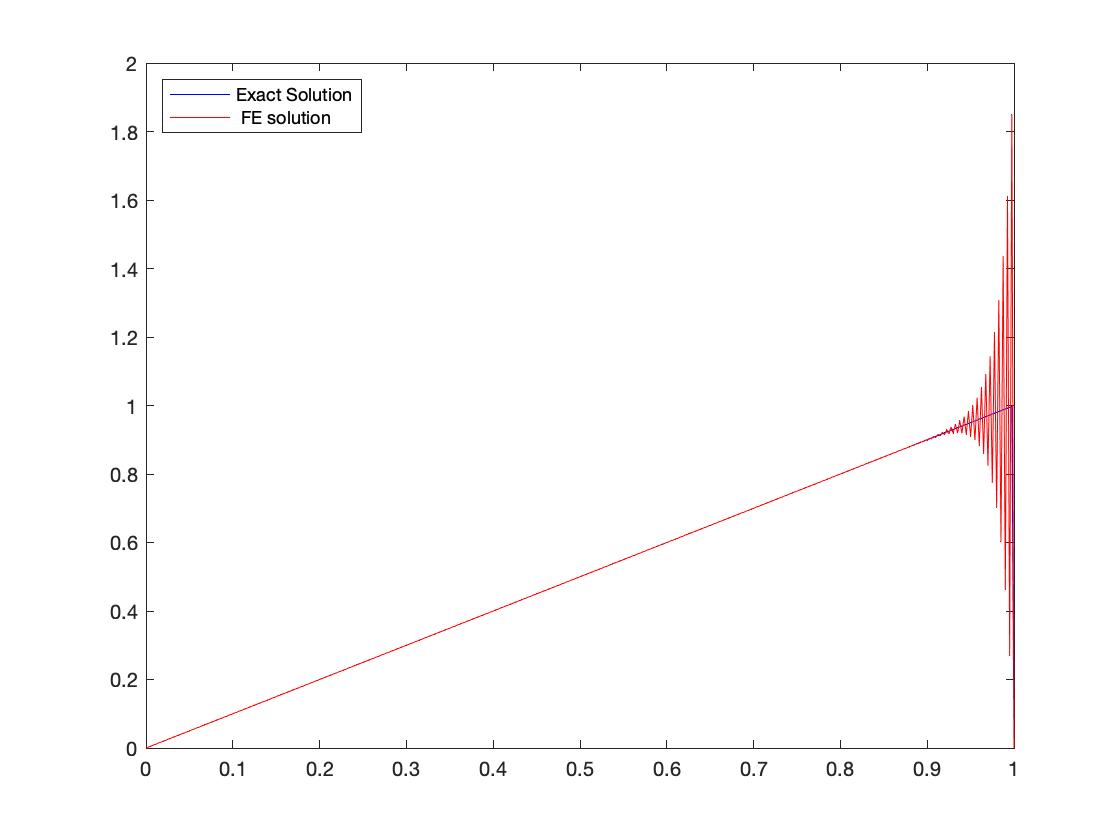}\\
{Fig.4: $f=1, n=400, \varepsilon=10^{-4}$} 
\end{center}
}
\vspace{0.1in}

\hspace{-12mm} \parbox{0.55\textwidth}{
\begin{center}
\includegraphics[width=0.55\textwidth]{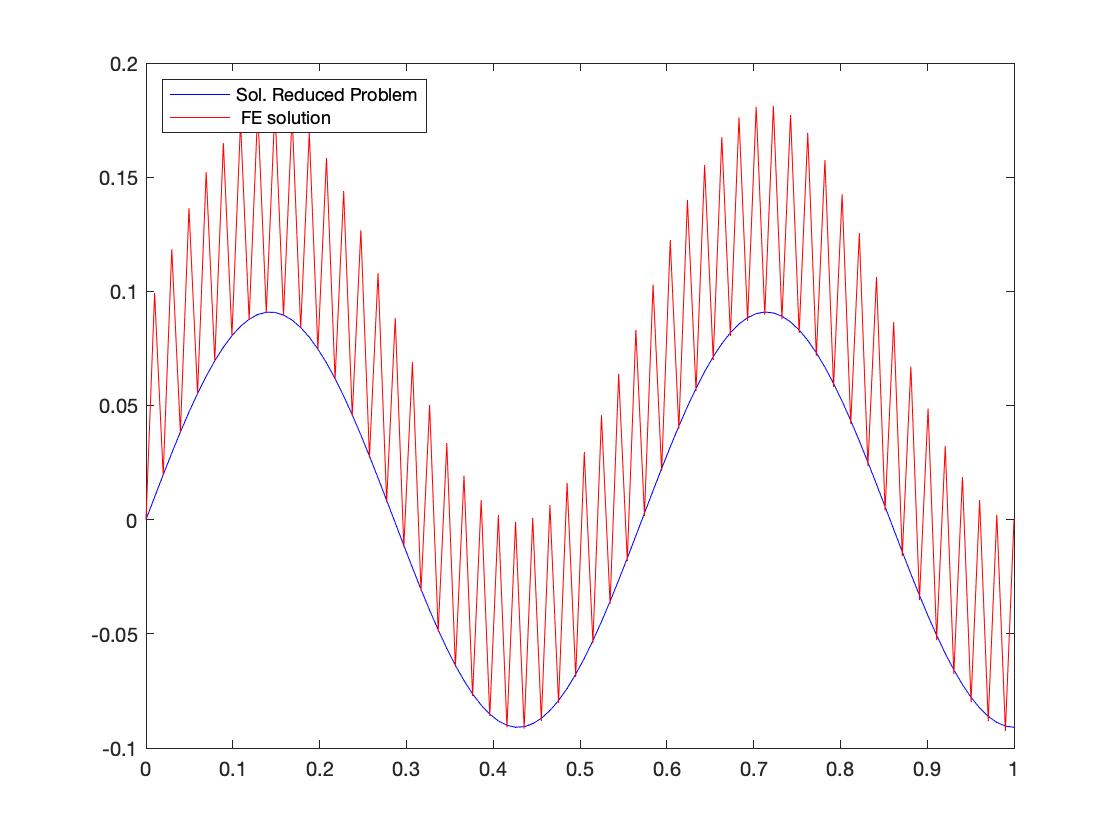}\\
{Fig.5:$f=\cos(\frac{7\pi}{2} x), n=101, \varepsilon=10^{-6}$} 
\end{center}
}
\parbox{0.55\textwidth}{
\begin{center}
\includegraphics[width=0.55\textwidth]{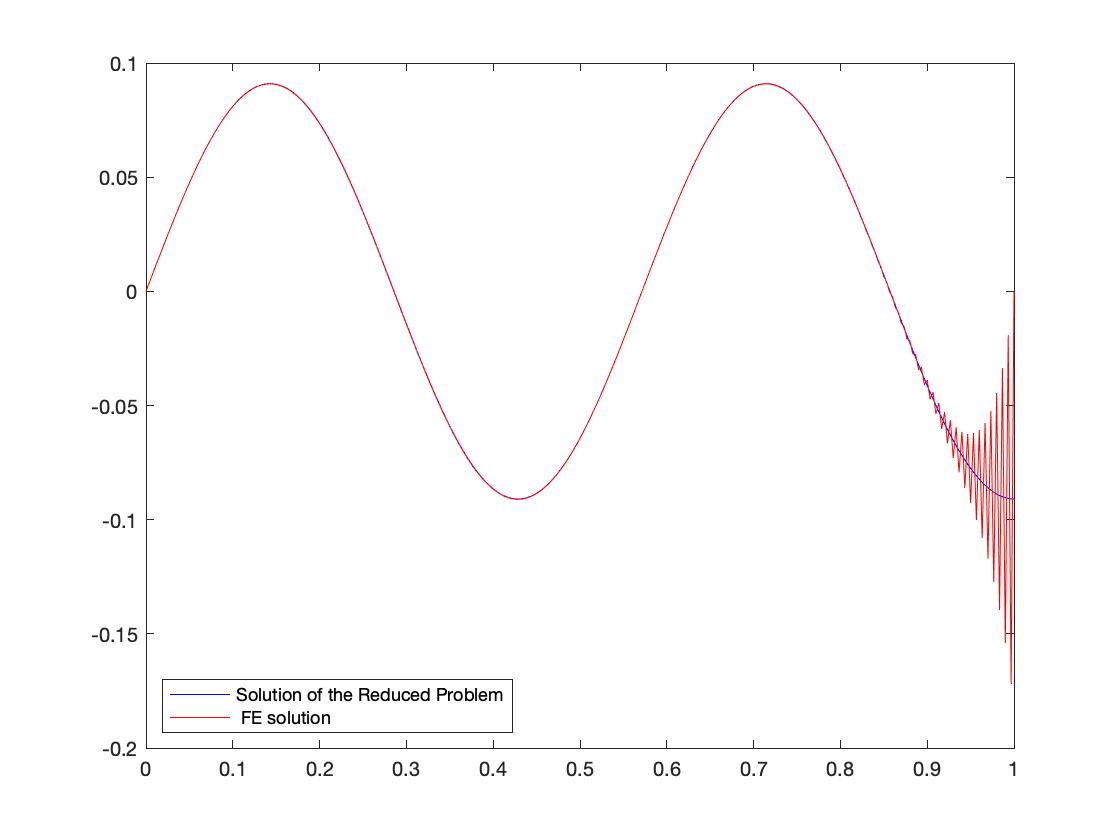}\\
{Fig.6:$ n=300, \varepsilon=10^{-4}$} 
\end{center}
}
\vspace{0.1in}
The solution of \eqref{VF1d-reduced2} is  $\theta(x)= -\int_x^1 f(s)\, ds$. Thus,  $\theta(x)=w(x)- \int_0^1 f(x)\, dx$.

For the case $n=2m$, the system \eqref{sys-even} is identical, but since $u_0=u_{2m} =0$, the system might not have a solution.  In addition, the second system   \eqref{sys-odd} with  the last equation  removed,  is undetermined, and could have infinitely many solutions. The discretization of \eqref{dVF}  is still very oscillatory in this case, see Fig.2.

Numerical  tests for the  case $\int_0^1  f(x)  \, dx\neq 0$ show that, as $\varepsilon/h \to 1$, the linear finite element solution of \eqref{dVF}  oscillates between two curves 
and approximates well the graph of $w$ on intervals $[0, \, \alpha(h)]$ with $\alpha(h)\to 1$  as $h$ gets closer and closer to $\epsilon$, see Fig.3, Fig.4, and Fig.6. 

The behavior of the standard linear finite element approximation  of \eqref{dVF}  motivates  the use of non-standard discretization approaches, such as the  {\it saddle point   least square} or {\it Petrov-Galerkin} methods.

\subsection{SPLS discretization} \label{sec:MMD}
For improving the stability  and approximability of the finite element approximation a  {\it  saddle point least square}  (SPLS) method has  been used, see e.g., \cite{BM12,Dahmen-Welper-Cohen12, dem-fuh-heu-tia19}.  The SPLS method for solving \eqref{VF1d} is: Find $(w, u) \in V \times Q$ such that 
\begin{equation}\label{SPLS4model2}
\begin{array}{lclll}
a_0(w,v) & + & b( v, u) &= (f,v ) &\ \Forall  v \in V,\\
b(w,q) & & & =0   &\  \Forall  q \in Q,   
\end{array}
\end{equation}
where $V=Q= H^1_0(0,1)$, with possible  different type of norms, and \\  $b(v, u) =\varepsilon\, a_0(u, v)+(u',v)= \varepsilon\, (u', v')+(u',v)$. 

For the discretization of \eqref{SPLS4model2},  we choose finite element spaces
$\M_h \subset Q$ and $V_h \subset V$ and solve the discrete problem: 
Find $(w_h, u_h) \in V_h \times \M_h$ such that 
\begin{equation}\label{SPLS4model-h}
\begin{array}{lclll}
a_0(w_h,v_h) & + & b( v_h, u_h) &= (f,v_h ) &\ \Forall  v_h \in V_h,\\
b(w_h,q_h) & & & =0   &\  \Forall  q_h \in \M_h.   
\end{array}
\end{equation}


Similar analysis and numerical results for finite element  test and trail spaces of various degree polynomial were done in  \cite{Dahmen-Welper-Cohen12, dem-fuh-heu-tia19}. In this section, we provide some numerical results for  $\M_h= C^0-P^1:= span\{ \varphi_j\}_{j = 1}^{n-1}$, with $\varphi_j$'s  the standard linear nodal functions  and $V_h=C^0-P^2$  on the given uniformly distributed nodes on $[0, 1]$, to show the  improvement from the standard linear discretization. We note that, using the  optimal norm on $\M_h$ (see Section \ref{sec:analysis-quad}),  we have a discrete $\inf-\sup$ condition satisfied. The presence of non-physical oscillation is diminished, and the  errors are better for the SPLS discretization, see Table 1 and Table 2.  

For  $\int_0^1f(x)\, dx=0$ there is no much difference in the solution behaviour for the two methods.  
But, for  $\int_0^1  f(x) \, dx\neq 0$, our numerical tests  showed  an essential improvement for the SPLS solution. Inside the interval $[3h, 1-3h]$ the SPLS solution $u_h$, approximates the  shift by a constant  of  the solution $u$ of the original problem \eqref{VF1d},   see Fig.7-Fig.10. The oscillations appear only at the ends of the interval. 
The behavior can be explained by similar arguments presented in Section \ref{sec:LinP1} as follows: 
The {\it simplified} problem, obtained from \eqref{SPLS4model2} by letting $\varepsilon \to 0$, is not well posed when $\int_0^1  f(x)  \, dx\neq 0$. However, the {\it simplified} linear system obtained from \eqref{SPLS4model-h}  by letting $\varepsilon \to 0$, i.e.: Find $(w_h, u_h) \in V_h \times \M_h$ such that 
\begin{equation}\label{SPLS4model-h-R}
\begin{array}{lclll}
(w'_h,v'_h) & + & (u'_h, v_h) &= (f,v_h ) &\ \Forall  v_h \in V_h,\\
(w_h,q'_h) & & & =0   &\  \Forall  q_h \in \M_h,   
\end{array}
\end{equation}
 has unique solution, because a discrete $\inf-\sup$ condition, using optimal trial norm, holds (see Section \ref{sec:analysis-quad}).    Numerical tests  for $\varepsilon \leq 10^{-3}$ show that the solution of the simplified system \eqref{SPLS4model-h-R} approximates  the function  $\frac{1}{2} (w(x) +\theta(x))$ where $w, \theta$, are  the solutions of the reduced problems \eqref{VF1d-reduced} and \eqref{VF1d-reduced2}. Similar type of  oscillations depending only on $h$ towards the ends of $[0, 1]$ are still present. For example, for $f=1$  and $n=101$, the solution of \eqref{SPLS4model-h-R} is  close to $x-1/2$, see  Fig.7. 
For  $\varepsilon/h \leq 10^{-4}$ the solution of \eqref{SPLS4model-h} is close to the solution of   \eqref{SPLS4model-h-R}.  However, as $10^{-4}<\varepsilon/h  \to 1$, the  solution of  \eqref{SPLS4model-h} is decreasing  the size of the shifting constant and approximates  $u$, rather than ${1}/{2} (w(x) +\theta(x))$.  Similar oscillations are still present, but only outside of the interval $[3h, 1-3h]$. The error analysis of Section \ref{sec:analysis-quad} reveals the  discrete solution behavior based on the explicit form we find for the optimal norm on $\M_h$. 


\hspace{-12mm} \parbox{0.5\textwidth}{
\begin{center}
\includegraphics[width=2.5in]{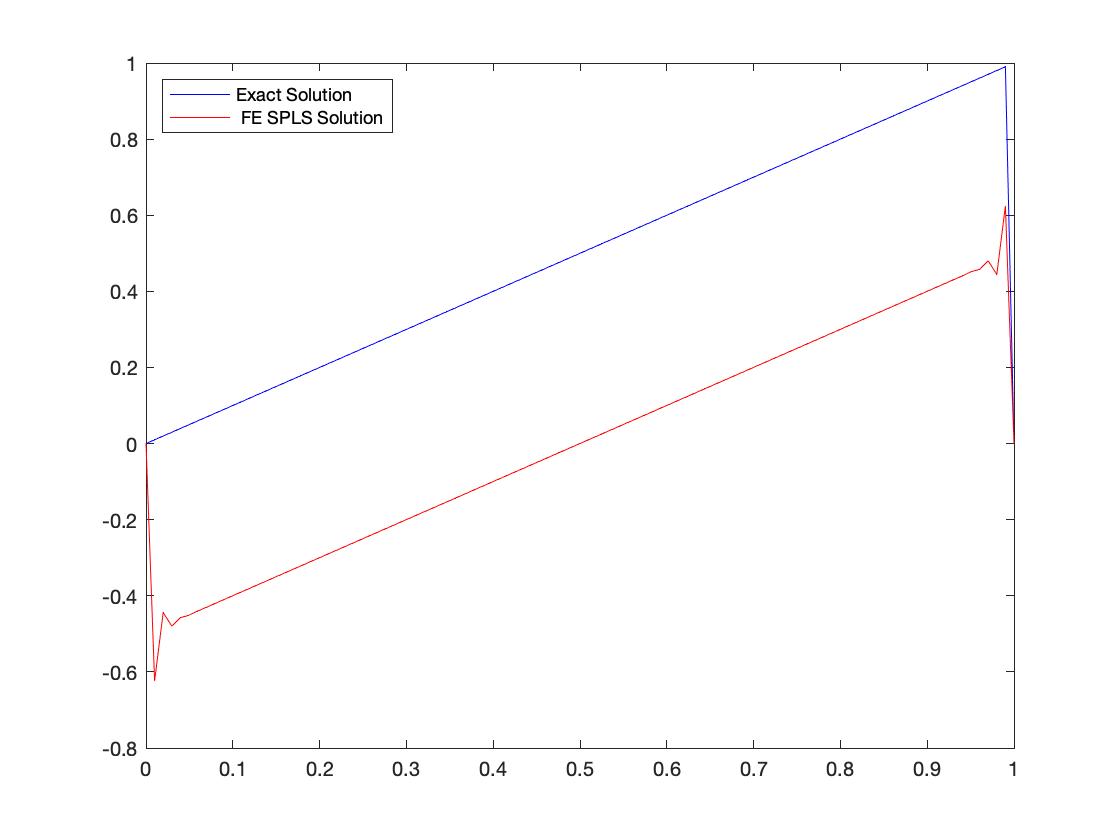}\\
{Fig.7:  $f=1, n=101, \varepsilon=10^{-6}$} 
\end{center}
}
\parbox{0.55\textwidth}{
\begin{center}
\includegraphics[width=0.5\textwidth]{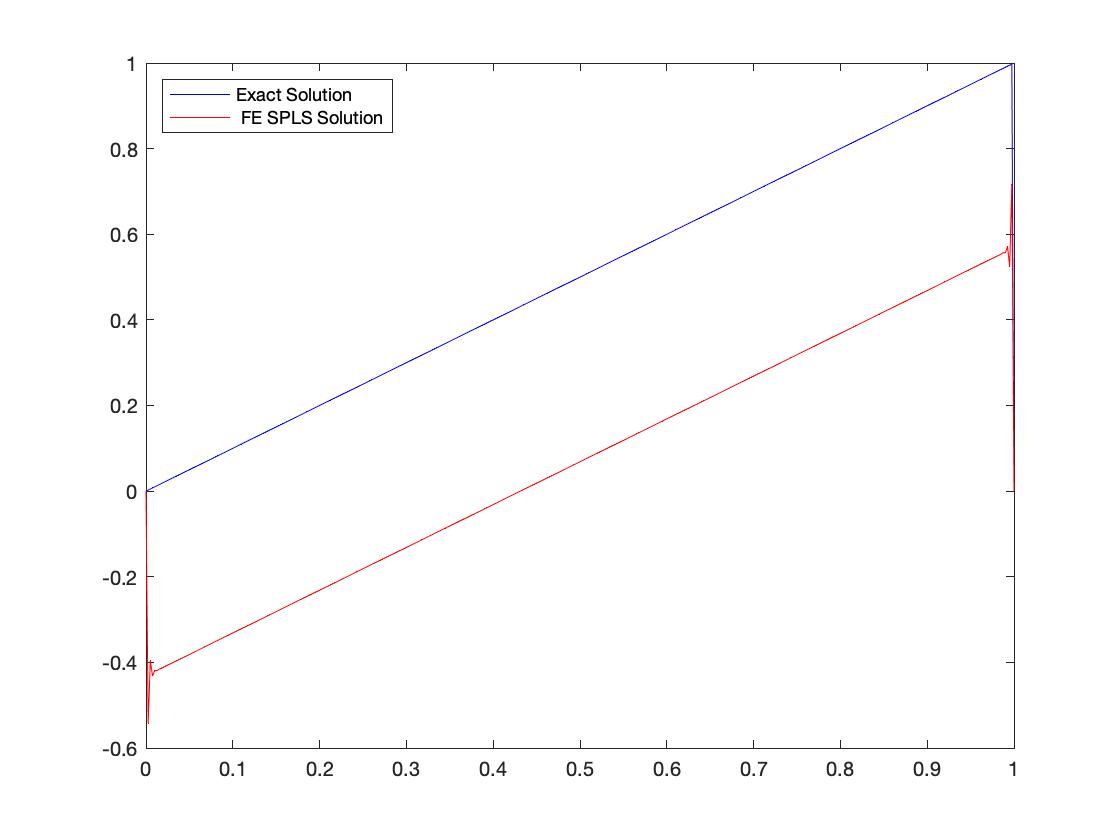}\\
{Fig.8: $f=1, n=400, \varepsilon=10^{-4}$} 
\end{center}
}
\vspace{0.1in}

\hspace{-12mm} \parbox{0.55\textwidth}{
\begin{center}
\includegraphics[width=0.55\textwidth]{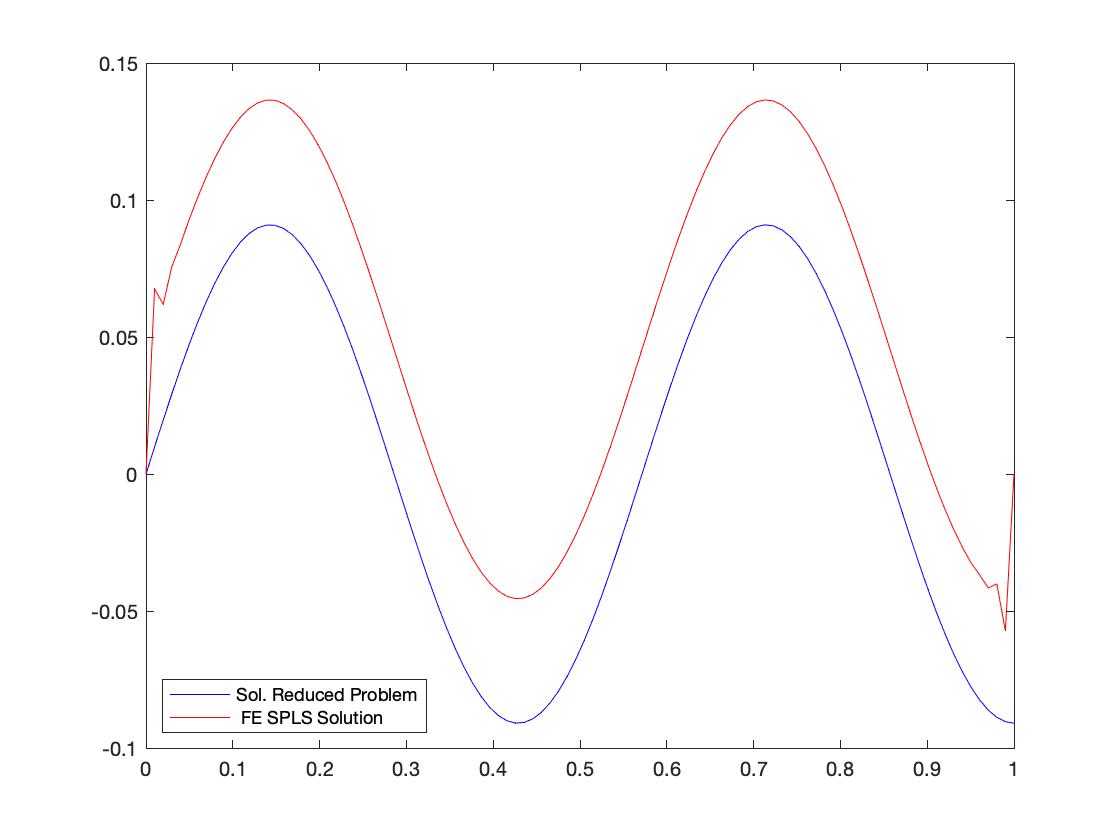}\\
{Fig.9 $f=\cos(\frac{7\pi}{2} x), n=101, \varepsilon=10^{-6}$} 
\end{center}
}
 \parbox{0.55\textwidth}{
\begin{center}
\includegraphics[width=0.55\textwidth]{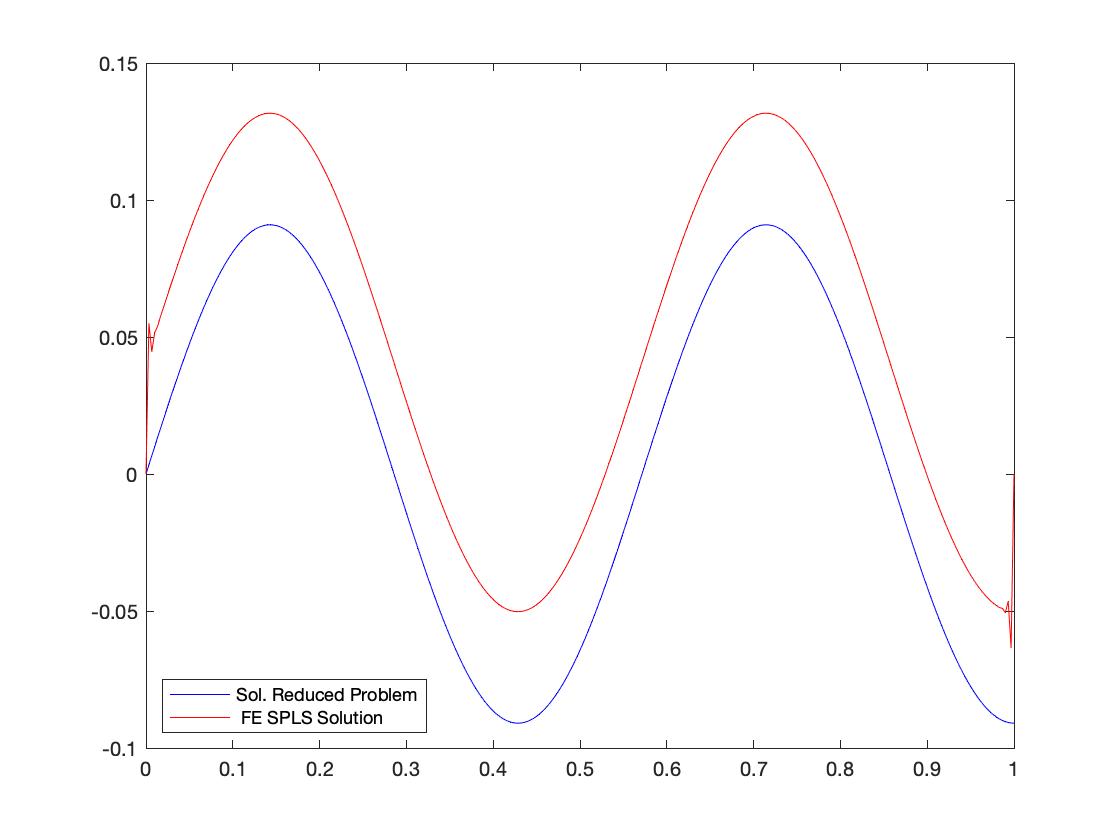}\\
{Fig.10 $f=\cos(\frac{7\pi}{2} x), n=300, \varepsilon=10^{-4}$} 
\end{center}
}
\vspace{0.1in}

\subsection{Petrov Galerkin (PG) with bubble enriched test space $V_h$}\label{ssec:PGbubbles}

We consider   $b(v,u) :=\varepsilon\, a_0(u, v)+(u',v) $  for all $ u, v \in V:=H^{1}_0(0,1)$. We view the PG method as a particular case of the SPLS formulation  \eqref{SPLS4model2}.
The second equation in \eqref{SPLS4model2} implies $w=0$, and the SPLS problem reduces to:  Find $u \in Q$ such that 
\begin{equation}\label{PG4model}
b( v, u) = (f,v ) \ \Forall  v \in V,\\
\end{equation}
which is a Petrov-Galerkin method for solving \eqref{eq:1d-model}.
\subsubsection{Upwinding Petrov Galerkin discretization}
One of the well known  Petrov-Galerkin discretization of the model problem \eqref{PG4model} with $\M_h= span\{ \varphi_j\}_{j = 1}^{n-1}$ consists of modifying the test space such that diffusion is created from the convection term.  This is also known as an {\it upwinding finite element scheme}, see Section 2.2 in \cite{roos-stynes-tobiska-96}. We define the test space $V_h$ by introducing a  bubble function for each interval $[x_{i-1}, x_i], i=1,2, \cdots, n$:
 \[
B_i:= 4\, \varphi_{i-1}\, \varphi_i, \ \ i=1,2, \cdots, n,
\] 
which is supported in $[x_{i-1}, x_i]$. The discrete test space $V_h$ is 
\[
V_h:= span \{ \varphi_j  + B_{j}-B_{j+1}\}_{j = 1}^{n-1}. 
\]
We note that both $\M_h$ and $V_h$ have dimension $n-1$ and $V_h\subset C^0-P^2$. 

In a more general approach the test functions can be defined using  upwinding parameters $\sigma_i >0$ to get $V_h:= span \{ \varphi_j  + \sigma_i( B_{j}-B_{j+1})\} _{j = 1}^{n-1}$. 

\subsubsection{Variational formulation  and matrices}
The upwinding Petrov Galerkin discretization for 
\eqref{eq:1d-model} is: Find $u_h \in \M_h$ such that 
\begin{equation}\label{PG4model-h}
b( v_h, u_h) = (f,v_h) \ \Forall  v_h \in V_h. 
\end{equation}
We look for 
\[
u_h= \sum_{j=1}^{n-1} \alpha_j \varphi_j,
\]
and consider a generic test function 
\[
v_h= \sum_{i=1}^{n-1} \beta_i \varphi_i + \sum_{i=1}^{n-1}  \beta_i (B_i - B_{i+1}) = \sum_{i=1}^{n-1} \beta_i \varphi_i + \sum_{i=1}^{n}  (\beta_i - \beta_{i-1}) B_{i},
\]
where, we define $\beta_0=\beta_n=0$. Denoting,
\[
B_h:=\sum_{i=1}^{n}  (\beta_i - \beta_{i-1}) B_{i},  \ \text{and } \  w_h:=  \sum_{i=1}^{n-1} \beta_i \varphi_i ,
\]
we have 
\[
v_h=w_h + B_h.
\]
For a generic bubble function $B$  with support $[a, b]$, we have
\[
B:= \frac{4}{(b-a)^2} (x-a) (b-x), \ \text{with}  \ a<b,\  \text{and}
\]
\begin{equation} \label{eq:B-prop}
\int_a^b B(x)\, dx  = \frac{2 (b-a)}{3}, \ 
\int_a^b B'\, dx  =0, \ 
\int_a^b (B')^2\, dx  =\frac{16}{3 (b-a)}.
\end{equation}
Using the above formulas, the fact that $u'_h, w'_h $  are constant  on each of  the intervals $[x_{i-1}, x_i]$,  and that $w'_h= \frac{\beta_i -\beta_{i-1} }{h}$ on $[x_{i-1}, x_i]$,  we obtain
\[
(u'_h, B_h) = \sum_{i=1}^{n}\int_{x_{i-1}}^{x_i}  u'_h (\beta_i - \beta_{i-1}) B_{i}=
 \sum_{i=1}^{n} u'_h \,  w'_h \int_{x_{i-1}}^{x_i}  B_{i} =\frac{2h}{3}  \sum_{i=1}^{n} \int_{x_{i-1}}^{x_i}  u'_h  w'_h. 
\]
Thus
\begin{equation} \label{eq:upBh}
(u'_h, B_h) =\frac{2h}{3}  (u'_h, w'_h), \ \text{where} \  v_h=w_h + B_h.
\end{equation}

In addition, 
\[
(u'_h, B'_i) =0 \ \text{for all} \ i=1, 2, \cdots,  n, \text{hence} 
\]
\begin{equation} \label{eq:upBph}
 (u'_h, B'_h) =0,  \text{for all} \  u_h \in \M_h, v_h=w_h + B_h \in V_h.
\end{equation}
From  \eqref{eq:upBh} and  \eqref{eq:upBph}, for any $u_h \in \M_h, v_h=w_h + B_h \in V_h$ we get
\begin{equation} \label{eq:bPG}
 b(v_h, u_h) = \left (\varepsilon + \frac{2h}{3}\right )  (u'_h, w'_h) +  (u'_h, w_h).
\end{equation}
Thus, adding the bubble part to the test space leads to the extra diffusion term $  \frac{2h}{3}  (u'_h, w'_h)$ with  $\frac{2h}{3} >0$  matching the sign of the coefficient of $u'$ in  \eqref{eq:1d-model}. It is also interesting to note  that  only the linear part of $v_h$ appears in the expression of $ b(v_h, u_h)$.  The  functional  $v_h \to (f, v_h)$ can  be also viewed as a functional only of the linear part $w_h$. Indeed, using the splitting $v_h=w_h + B_h $  and that $B_h:=\sum_{i=1}^{n}  (\beta_i - \beta_{i-1}) B_{i}$, we get
\[
(f, v_h) =(f, w_h) + (f, \sum_{i=1}^{n}  h w'_h B_i) =(f, w_h) +h\,  (f, w'_h  \sum_{i=1}^{n}   B_i). 
\]
The variational formulation of the upwinding  Petrov-Galerkin method can be reformulated as: Find $u_h \in \M_h$ such that 
\begin{equation}\label{PG4model-hR}
 \left (\varepsilon + \frac{2h}{3}\right )  (u'_h, w'_h) +  (u'_h, w_h) = (f, w_h )  + h\,  (f, w'_h  \sum_{i=1}^{n}   B_i),  w_h \in M_h. 
\end{equation}
 The reformulation allows for a new error analysis using an optimal test norm and facilitates the  comparison with the known {\it stream-line diffusion} (SD) method  of discretization, as presented in the next section. 
 
 For the analysis of the method, using \eqref{eq:upBph} and the last part of \eqref{eq:B-prop},  we  note that for any $v_h=w_h + B_h \in V_h$ we have 
 \[
 \begin{aligned}
 (v'_h, v'_h) & =(w'_h + B'_h, w'_h + B'_h) = (w'_h, w'_h) + ( B'_h, B'_h) =\\ 
 & =(w'_h, w'_h) +  \sum_{i=1}^{n} (\beta_i-\beta_{i-1})^2 (B'_i, B'_i)=\\
 & =(w'_h, w'_h)   + \frac{16h}{3}  \sum_{i=1}^{n} \left (\frac{\beta_i-\beta_{i-1}}{h} \right )^2 = \\
 & =(w'_h, w'_h)   +  \frac{16}{3}  \sum_{i=1}^{n} \left (\int_{x_{i-1}}^{x_i} (w'_h)^2 \right )^2 =(w'_h, w'_h)   +  \frac{16}{3} (w'_h, w'_h). 
 \end{aligned}
 \]
 
Consequently,
\begin{equation} \label{eq:vhwh}
|v_h|^2 = \frac{19}{3} |w_h|^2. 
\end{equation}
Using the reformulation \eqref{PG4model-hR}, the  linear system  to be solved is 
\begin{equation}\label{1d-PG-ls}
\left ( \left (\frac{\varepsilon}{h} +  \frac{2h}{3} \right ) S+ C \right )\, U = F_{PG}, 
\end{equation}
where \(U,F_{PG}\in\R^{n-1}\)  with:
 \[
 U:=\begin{bmatrix}u_1\\u_2\\\vdots\\u_{n-1}\end{bmatrix},\quad F_{PG}:= \begin{bmatrix}(f,\varphi_1)\\ (f,\varphi_2)\\\vdots \\ (f,\varphi_{n-1})\end{bmatrix} + 
 \begin{bmatrix} (f, B_1 -B_2) 
 \\  (f, B_2-B_3)\\ \vdots \\  (f, B_{n-1} -B_n) \end{bmatrix}, 
\]
and $S, C$ are the matrices defined at the beginning of this section.
Numerical tests show that this method does not lead to any kind of non-physical oscillations. We will provide our analysis of the method as a mixed method in Section \ref{ssec:PGanalysis}. 

\subsection{Stream line diffusion (SD)  discretization} \label{ssec:SD} 
Classical ways to introduce the SD method can be found in e.g., \cite{brezzi-marini-russo98,hughes-brooks79}. For our model problem, we  relate and compare the method with the upwinding PG method. 
We take  $\M_h= V_h= span\{ \varphi_j\}_{j = 1}^{n-1}$ and consider the  {\it stream line diffusion method} for solving  
\eqref{eq:1d-model}: Find $u_h \in \M_h$ such that 
\begin{equation}\label{SD4model-h}
b_{sd} (w_h, u_h) = F_{sd}(w_h) \ \Forall  w_h \in V_h, 
\end{equation} 
where 
\[
b_{sd} (w_h, u_h):= \varepsilon\, (u'_h, w'_h)+(u'_h,w_h) +  \sum_{i=1}^{n} \delta_i \int_{x_{i-1}}^{x_i} u'_h w'_h
\]
with $\delta_i >0$ weight parameters, and 
\[
F_{sd}(w_h) := (f, w_h) + \sum_{i=1}^{n} \delta_i \int_{x_{i-1}}^{x_i} f(x)\,  w'_h\, dx. 
\]
In a more general approach, $\delta_i$'s  are chosen  as functions of  $x_i-x_{i-1}$. Optimal choices for $\delta_i$'s are discussed in e.g., \cite{chen-xu08, linssT10}. For the choice 
\[
\delta_i = \frac{2h}{3}, \  i=1,2,\cdots, n,
\]
and arbitrary  $w_h, u_h \in \M_h =V_h$ the bilinear form $b_{sd}$ becomes 
\[
b_{sd} (w_h, u_h)=b(w_h,u_h) =\left (\varepsilon + \frac{2h}{3}\right )  (u'_h, w'_h) +  (u'_h, w_h),
\]
and the corresponding right hand side  functional  $F_{sd}$ is 
\begin{equation}\label{eq:RHS-sd}
F_{sd}(w_h)  = (f, w_h) + \frac{2h}{3}(f,w'_h), \ w_h \in V_h. 
\end{equation} 
Thus, by choosing the appropriate  weights, the upwinding PG  and SD discretization methods  lead to the same stiffness matrix.  By comparing the right hand sides of 
\eqref{PG4model-hR} and \eqref{eq:RHS-sd}, we note that the two methods produce the same system (solution)  if and only if 
\begin{equation}\label{eq:RHS=}
(f, w'_h  \sum_{i=1}^{n}   B_i)=  \frac{2}{3}(f,w'_h), \ \text{for all} \ w_h \in V_h=C^0-P^1.
\end{equation} 
This is a  feasible condition, as 
\[
 \int_0^1 \sum_{i=1}^{n}   B_i = n\, \frac{2h}{3}= \frac{2}{3}.
 \]
In fact, the condition \eqref{eq:RHS=}  is satisfied for $f=1$. In this case, both sides of \eqref{eq:RHS=}   are zero. 
Due to  a reformulation as a mixed conforming variational  method,  
 we expect the upwinding PG method  to perform better for certain error norms, see Tables \ref{table:Test2},  \ref{table:Test3}, and \ref{table:Test4}. It is known, \cite{ bartels16, quarteroni-sacco-saleri07,roos-stynes-tobiska-96} that the error estimate for the SD method is defined using a special SD-norm. In the one dimensional case with  same  weights $\delta_i =\delta$, the norm becomes
\[
\|v\|^2_{sd} = \varepsilon |v|^2 + \delta  |v|^2. 
\]
For a fair comparison with the PG method, we take $\delta = \frac{2h}{3}$.  Provided the continuous solution $u$ of 
\eqref{eq:1d-model} satisfies $u\in H^2(0,1)$,  for  the SD discrete solution $u_h$ of \eqref{SD4model-h}, we have 

\begin{equation}\label{eq:EE-sd}
\|u-u_h\|_{sd} \leq  c_{sd}\,  h^{3/2} \|u''\|.
\end{equation}

For the comparison of the implementation of the two methods, we  can also compare the load vector $F_{PG}$ defined above to the load vector for the SD method:
\[
F_{SD}:= \begin{bmatrix}(f,\varphi_1)\\ (f,\varphi_2)\\\vdots \\ (f,\varphi_{n-1})\end{bmatrix} + 
 \frac{2h}{3} \begin{bmatrix} (f, \varphi'_1) 
 \\  (f, \varphi'_2)\\ \vdots \\  (f, \varphi'_n) \end{bmatrix}. 
\]

\section{Stability of mixed discretization for the 1D Convection reaction problem} \label{sec:stability}

We consider the discretization of \eqref{1d-LS} with $V=Q=H^1_0(0,1)$. The inner product on $V$ is given by  $a_0(u,v) = (u,v)_V = (u',v')$.  On $Q$ and   $\M_h= span \{ \varphi_j, j=1,2,\cdots,n\}$, we will consider optimal norms that ensure continuous and discrete stability. 

\subsection{Optimal trial norms}\label{ssec:ON}
We define the  anti-symmetric operator \\ $T:Q\to Q$ by 
 \[
 a_0(Tu,q) = (u',q), \ \text{for all} \  q \in Q.
 \]
By solving the corresponding differential equation, one can find that
\begin{equation}\label{eq:T-Action}
Tu = x\overline{u} - \int_0^xu(s)\,ds, 
\end{equation}
and
\begin{equation}\label{eq:T-NormSq}
|Tu|^2 = \int_0^1 |u(s)-\overline{u}|^2\, ds= \|u -\overline{u}\|^2= \|u\|^2 -\overline{u}^2,
\end{equation}
where $\overline{u}=  \int_0^1 u(s)\, ds$. 
 The optimal continuous trial norm on $Q$ is defined by 
\[
 \|u\|_{*}:= \du {\sup} {v \in V}{} \ \frac {b(v, u)}{|v|} =   \du {\sup} {v \in V}{} \ \frac {\varepsilon a_0(u, v) + a_0(Tu,v) }{|v|}.
\]
Using the Riesz representation theorem and the fact that $ a_0(Tu,u) =0$, we obtain that the 
optimal trial norm on $Q$  is given by
\begin{equation}\label{eq:COptNorm}
\|u\|_{*}^2  =\varepsilon^2|u|^2 +|Tu|^2 = \varepsilon^2|u|^2 + \|u\|^2 -\overline{u}^2.
\end{equation}
The advantage of using  the  optimal trial norm on $Q$   resides in the fact that both $inf-sup$ and $sup-sup$ constants  at the continuous level are one. 

For the purpose of obtaining a discrete optimal norm on $\M_h$, we  let $P_h:Q\to V_h$ be the standard elliptic projection defined by 
\[
a_0(P_h\, u, v_h) = a_0(u,v_h), \ \text{for all} \, v_h \in V_h, 
\]
where the discrete test space $V_h$ could be different from $\M_h$.  The optimal trial norm   on $ \M_h$ is 

 \begin{equation}\label{eq:dotn}
  \|u_h\|_{*,h}:= \du {\sup} {v_h \in V_h}{} \ \frac {b(v_h, u_h)}{|v_h|}.
 \end{equation}
As in the continuous case, 
\[
 \|u_h\|_{*,h}:=  \du {\sup} {v_h \in V_h}{} \ \frac {\varepsilon a_0(u_h, v_h) + a_0(Tu_h,v_h) }{|v_h|}  = \du {\sup} {v_h \in V_h}{} \ \frac {\varepsilon a_0(u_h, v_h) + a_0(P_h\,Tu_h,v) }{|v_h|}.
\]
 Assuming that $\M_h\subset V_h$ and using
\[
a_0(P_h\, Tu_h, u_h) = a_0(T u_h,u_h)=0,
\]
by the Riesz representation theorem on $V_h$, we get 

\begin{equation}\label{eq:COptNorm-h}
\|u_h\|_{*,h}^2  =\varepsilon^2|u_h|^2 +|P_hTu_h|^2 := \varepsilon^2|u_h|^2 +|u_h|^2_{*,h}.
\end{equation}
Note that for the given  trial  spaces  $\M_h$  and $Q$, the above norm is well defined for any $u\in Q$. Hence, the continuous and discrete optimal trial norms can be compared on $Q$.

 \subsection{Analysis of  the standard linear discretization with optimal trial norm}\label{sec:analysis-lin} 
 
  We let  $V_h= \M_h= span\{ \varphi_j\}_{j = 1}^{n-1}$, with $\varphi_j$'s the standard linear nodal functions. 
 The optimal trial norm   on $ \M_h$ is given by \eqref{eq:COptNorm-h}. However,  in the one dimensional case, the elliptic projection  $P_h$ on piecewise linears coincides with the interpolant, see e.g., \cite{CB2}. Consequently, using the formula \eqref{eq:T-Action}  for $Tu$, we obtain that  
 $P_h Tu$ can be determined explicitly. Thus,  we have 
 \[
 P_h(Tu)(x_i) = x_i \int_{0}^{1} u(s)\, ds -\int_{0}^{x_i} u(s)\, ds, \ \text{and} 
 \]
 \begin{equation}\label{eq:PhTu-Norm}
|u|^2_{*,h}:= |P_hTu|^2 = \frac{1}{h}\sum_{i=1}^n \left(\int_{x_{i-1}}^{x_i} u(x)\,dx\right)^2 - \left(\int_0^1u(x)\,dx\right)^2.
\end{equation}

 To see more precise estimates that relate the norms  $ \|\cdot \|^2_{*,h}$   and $\|\cdot \|^2_{*}$, we define $c_0$ to be the best constant in the following  Poincare Inequality
\begin{equation}\label{eq:P}
 \|w\| \leq c_p (b-a)\, |w|, \text{ for all }  w \in L^2_0(a,b) \cap H^1(a,b). 
\end{equation}
It has been known that the best constant is $c_p=1/{\pi} $, which can be proved by using the spectral theorem for compact operators on Hilbert spaces for  the inverse of the (1d) Laplace operator with homogeneous Neumann boundary conditions. A more direct  proof for  \eqref{eq:P} can be done  with the  constant $c=1/\sqrt{2}$.
 \begin{prop}\label{relateOptNorms}
Let  $\|u\|_{*}$ and $\|u\|_{*,h}$ be the optimal trial norms defined in Section \ref{ssec:ON} for $V_h= \M_h= span\{ \varphi_j\}_{j = 1}^{n-1}$ . We have 
\begin{equation}\label{eq:eqNorms}
 \|u\|^2_{*,h}  \leq \|u\|^2_{*}\leq  \|u\|^2_{*,h}  + c_p^2 h^2 |u|^2\ \text{for all} \  u\in Q.
\end{equation}
\end{prop}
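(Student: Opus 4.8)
The plan is to compare the two norms term by term using their explicit representations. Recall from \eqref{eq:COptNorm} that $\|u\|_*^2 = \varepsilon^2|u|^2 + \|u\|^2 - \overline u^2$ and from \eqref{eq:COptNorm-h}–\eqref{eq:PhTu-Norm} that $\|u\|_{*,h}^2 = \varepsilon^2|u|^2 + \tfrac1h\sum_{i=1}^n\bigl(\int_{x_{i-1}}^{x_i}u\bigr)^2 - \overline u^2$. The $\varepsilon^2|u|^2$ and $\overline u^2$ pieces cancel when we take differences, so the proposition is equivalent to showing
\[
\frac1h\sum_{i=1}^n\Bigl(\int_{x_{i-1}}^{x_i}u\Bigr)^2 \;\le\; \|u\|^2 \;\le\; \frac1h\sum_{i=1}^n\Bigl(\int_{x_{i-1}}^{x_i}u\Bigr)^2 + c_p^2h^2|u|^2.
\]
Both inequalities will follow by splitting $\|u\|^2 = \int_0^1 u^2 = \sum_{i=1}^n\int_{x_{i-1}}^{x_i}u^2$ and working on each subinterval $[x_{i-1},x_i]$ separately.

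First I would prove the left-hand inequality. On each interval $I_i=[x_{i-1},x_i]$ of length $h$, the Cauchy–Schwarz inequality gives $\bigl(\int_{I_i}u\bigr)^2 \le h\int_{I_i}u^2$, i.e. $\tfrac1h\bigl(\int_{I_i}u\bigr)^2 \le \int_{I_i}u^2$. Summing over $i$ yields $\tfrac1h\sum_i\bigl(\int_{I_i}u\bigr)^2 \le \|u\|^2$, which is exactly $\|u\|_{*,h}^2 \le \|u\|_*^2$.

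Next, for the right-hand inequality, on each $I_i$ write $u = \bar u_i + (u-\bar u_i)$ where $\bar u_i := \tfrac1h\int_{I_i}u$ is the mean of $u$ on $I_i$. Since $u - \bar u_i \in L^2_0(I_i)\cap H^1(I_i)$, the Poincaré inequality \eqref{eq:P} gives $\int_{I_i}(u-\bar u_i)^2 \le c_p^2 h^2\int_{I_i}(u')^2$. Because $\bar u_i$ is the orthogonal projection of $u|_{I_i}$ onto constants, we have $\int_{I_i}u^2 = h\,\bar u_i^2 + \int_{I_i}(u-\bar u_i)^2 = \tfrac1h\bigl(\int_{I_i}u\bigr)^2 + \int_{I_i}(u-\bar u_i)^2$. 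Combining, $\int_{I_i}u^2 \le \tfrac1h\bigl(\int_{I_i}u\bigr)^2 + c_p^2h^2\int_{I_i}(u')^2$, and summing over $i=1,\dots,n$ gives $\|u\|^2 \le \tfrac1h\sum_i\bigl(\int_{I_i}u\bigr)^2 + c_p^2h^2|u|^2$. Adding back $\varepsilon^2|u|^2 - \overline u^2$ to both sides produces \eqref{eq:eqNorms}.

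The only genuine content here is the local Poincaré estimate with the explicit constant $c_p = 1/\pi$ (or $1/\sqrt2$ by the elementary argument mentioned in the text), together with the orthogonal-decomposition identity on each subinterval; everything else is Cauchy–Schwarz and summation. I do not anticipate a real obstacle — the main point to be careful about is verifying that $\tfrac1h\bigl(\int_{I_i}u\bigr)^2 = h\,\bar u_i^2$ is indeed the contribution of the constant part, so that the Pythagorean split on $I_i$ is exact and no cross term is dropped.
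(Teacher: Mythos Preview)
Your proposal is correct and follows essentially the same route as the paper: both arguments reduce the comparison to $\|u\|^2$ versus $\tfrac{1}{h}\sum_i\bigl(\int_{I_i}u\bigr)^2$, obtain the lower bound from the fact that projection onto constants (equivalently, your Cauchy--Schwarz step) does not increase the $L^2$ norm, and obtain the upper bound from the Pythagorean identity $\int_{I_i}u^2 = h\bar u_i^2 + \int_{I_i}(u-\bar u_i)^2$ together with the local Poincar\'e inequality \eqref{eq:P}. The paper states the left inequality a bit more abstractly (``the norm of $P_h$ is one''), but the content is identical to your Cauchy--Schwarz argument.
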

\begin{proof}
 The left side inequality follows from comparing the formulas for   $\|u\|_{*}$  and  $ \|u\|_{*,h}$, and by using that the norm of the projection operator $P_h$ is one.   For the other inequality, using \eqref{eq:P}  on each subinterval $[x_{i-1}, x_i]$  we  get
  \[
 \begin{aligned}
 \|u\|^2_{*} -\|u\|^2_{*,h} &= \int_0^1u^2(x)\,dx - \frac{1}{h}\sum_{i=1}^n \left( \int_{x_{i-1}}^{x_i} u(x)\,dx\right)^2\\
 &= \sum_{i=1}^n \int_{x_{i-1}}^{x_i} \left (u -\frac{1}{h}  \int_{x_{i-1}}^{x_i} u(x)\,dx \right )^2 \,dx\\
 &\leq c_p^2 h^2 \sum_{i=1}^n \int_{x_{i-1}}^{x_i} (u'(x))^2\,dx = c_p^2 h^2 |u|^2, 
 \end{aligned}
\]
which proves the right side inequality.
 \end{proof}
 From \eqref{eq:eqNorms} it is easy to obtain
 \begin{equation}\label{eq:eqNorms2}
 \|u\|^2_{*,h}  \leq \|u\|^2_{*}\leq \left (1+ \frac{(c_p\, h)^2}{\varepsilon^2} \right ) \|u\|^2_{*,h}\ \text{for all} \  u\in Q.
\end{equation}

As a consequence of the approximation Theorem \ref{th:ap-PG} and \eqref{eq:eqNorms2} we obtain: 

\begin{theorem}\label{th:opt-lin}
If $u$ is the solution of \eqref{VF1d}, and $u_h$ the solution of the  linear discretization \eqref{dVF}, then 
\[
\|u-u_h\|_{*,h}  \leq c(h,\varepsilon) \du{\inf}{v_h \in V_h}{} \ \|u-v_h\|_{*,h}, \ \text{where} 
\]
\[
c(h,\varepsilon)= \sqrt {1+ \frac{(c_p\, h)^2}{\varepsilon^2}  } \approx \frac{c_p\, h}{\varepsilon} \ \text{ if }\ \varepsilon <<h.
\]
\end{theorem}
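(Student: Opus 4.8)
The plan is to obtain the estimate as a direct specialization of the abstract Petrov--Galerkin approximation result, Theorem~\ref{th:ap-PG}. I would invoke it with $|\cdot|$ the $H^1_0$-seminorm $a_0(\cdot,\cdot)^{1/2}$ on $V$, with $\|\cdot\|_*$ the optimal \emph{continuous} trial norm \eqref{eq:COptNorm} on $Q$, and with $\|\cdot\|_{*,h}$ the optimal \emph{discrete} trial norm \eqref{eq:COptNorm-h} on $\M_h=V_h=span\{\varphi_j\}_{j=1}^{n-1}$. By the remark following \eqref{eq:COptNorm-h}, the expression $\|u\|_{*,h}^2=\varepsilon^2|u|^2+|P_hTu|^2$ is a well-defined norm on the whole of $Q$ (it dominates $\varepsilon|\cdot|$), so the hypotheses of Theorem~\ref{th:ap-PG} are meaningful. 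Since here $V_h=\M_h$, the standard linear problem \eqref{dVF} is exactly the Petrov--Galerkin problem \eqref{discrete_var_form} with $\langle F,v_h\rangle=(f,v_h)$.

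The first step is to verify the three stability hypotheses \eqref{sup-sup_a}, \eqref{inf-sup_a}, \eqref{inf-sup_h}. This is immediate from how the optimal norms are constructed: because $\|u\|_*=\du{\sup}{v\in V}{}\ \frac{b(v,u)}{|v|}$ and $\|\cdot\|_*$ is a genuine norm on $Q$ (from \eqref{eq:COptNorm}, $\varepsilon^2|u|^2\le\|u\|_*^2<\infty$), both the continuous $\sup$--$\sup$ and $\inf$--$\sup$ constants equal $M=m=1$; the same computation with $\du{\sup}{v_h\in V_h}{}$ replacing $\du{\sup}{v\in V}{}$, together with the definition \eqref{eq:dotn}, gives $m_h=1$ in \eqref{inf-sup_h}, which in particular — since $\dim V_h=\dim \M_h$ — makes \eqref{dVF} uniquely solvable.

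The second step is to supply the comparison constant $c_0$ required by Theorem~\ref{th:ap-PG}, that is, a bound $\|v\|_*\le c_0\|v\|_{*,h}$ valid for all $v\in Q$. This is precisely the right-hand inequality of \eqref{eq:eqNorms2}, which is a consequence of Proposition~\ref{relateOptNorms}, and it gives $c_0=c(h,\varepsilon)=\sqrt{1+(c_p h)^2/\varepsilon^2}$. Plugging $M=m_h=1$ and $c_0=c(h,\varepsilon)$ into \eqref{eq:Approx-Disc} yields
\[
\|u-u_h\|_{*,h}\le c(h,\varepsilon)\ \du{\inf}{v_h\in V_h}{}\ \|u-v_h\|_{*,h},
\]
which is the assertion (the infimum over $\M_h$ in \eqref{eq:Approx-Disc} is the infimum over $V_h$ here). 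The final statement $c(h,\varepsilon)\approx c_p h/\varepsilon$ for $\varepsilon\ll h$ is just the remark that the summand $1$ is negligible next to $(c_p h/\varepsilon)^2$ in that regime.

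I do not expect a genuine obstacle: all the substantive analysis — the explicit formula \eqref{eq:PhTu-Norm} for $|\cdot|_{*,h}$ and the norm comparison of Proposition~\ref{relateOptNorms} — is already in place, and what remains is bookkeeping plus a careful matching of the concrete spaces and norms to the abstract hypotheses. The one point worth stressing is conceptual rather than technical: the quasi-optimality constant here is \emph{not} $1$ (which is what the optimal-norm machinery delivers at the continuous level) but the $\varepsilon$-dependent factor $c(h,\varepsilon)$, which is exactly the equivalence constant between the continuous and discrete optimal trial norms on $Q$ and which blows up like $h/\varepsilon$ in the convection-dominated regime; this degeneracy is precisely what accounts for the non-physical behaviour of the standard linear scheme observed in Section~\ref{sec:LinP1}.
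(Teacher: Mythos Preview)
Your proposal is correct and follows essentially the same route as the paper: the paper states the theorem simply as a consequence of Theorem~\ref{th:ap-PG} together with the norm comparison \eqref{eq:eqNorms2}, which is precisely what you spell out, with $M=m=m_h=1$ for the optimal trial norms and $c_0=c(h,\varepsilon)$ coming from Proposition~\ref{relateOptNorms}.
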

The estimate can be useful for the case  $\int_0^1 f(x)\, dx=0$, when the $H^1$ or $H^2$ norms of the solutions are less dependent on $\varepsilon$, see e.g. \cite{roos-stynes-tobiska-96}. In this case, we can use that 
\[
\du{\inf}{v_h \in V_h}{} \ \|u-v_h\|_{*,h} \leq c(h,\varepsilon) \|u-u_I\|_{*,h},
\]
where $u_I$  is the linear interpolant of $u$ on the nodes $x_0, x_1, \cdots,x_n$, and exploit the approximation properties of the interpolant. 
In the general case, the error estimate is weak because $\|u\|_{*,h}^2  =\varepsilon^2|u|^2 +|P_hTu|^2$ and $|u |_{*,h}=|P_hTu|$ can be bounded above by $\|u \|_{L^2}$, but is not a norm by itself. Indeed, using  \eqref{eq:PhTu-Norm}  and  the Cauchy-Schwarz inequality, we have 
\begin{equation}\label{weekN}
|u|^2_{*,h} \leq  \frac{1}{n}\sum_{i=1}^n \left(\frac{1}{h} \int_{x_{i-1}}^{x_i} u(x)\,dx\right)^2 \leq 
\frac{1}{n}\sum_{i=1}^n \int_{x_{i-1}}^{x_i} u^2(x)\,dx=\|u\|^2. 
\end{equation}

On the other hand, by rearranging  the integrals in  \eqref{eq:PhTu-Norm}, we obtain 
\[
\begin{aligned}
|u|^2_{*,h} &= \frac{1}{n}\sum_{i=1}^n \left(\frac{1}{h} \int_{x_{i-1}}^{x_i} u(x)\,dx\right)^2 - 
\left( \frac{1}{n} \sum_{i=1}^n \frac{1}{h} \int_{x_{i-1}}^{x_i} u(x)\,dx \right)^2\\
&= \frac{\sum_{i=1}^n \overline{u_i}^2}{n} - \left ( \frac{\sum_{i=1}^n \overline{u_i}}{n}  \right )^2,
\end{aligned} 
\]
where $\overline{u_i}:= \frac{1}{h} \int_{x_{i-1}}^{x_i} u(x)\,dx$. This shows that  $|\cdot |^2_{*,h}$ is in general a seminorm
and we can have $|u|_{*,h} =0$ {\it if and only if}
\[
 \int_{x_{i-1}}^{x_i} u(x)\,dx= \frac{1}{n} \int_0^1u(x)\,dx, \ \text{ for all } i=1,2,\cdots,n.
\]
If $u=w_h\in \M_h$, the above condition can be  satisfied for $n=2m$ and  
$
w_h=C \sum_{i=1}^m \varphi_{2i-1}$, where $C$ is an arbitrary constant.  The graph of  such $w_h$ is highly oscillatory when $h\to 0$ and $|w_h|_{*,h}=0$.  
The ``zig-zag'' behavior of the linear  finite  element solution $u_h$ for small $\varepsilon/h$ can be  justified also   
by noting that $CU=F$ for $U$ the coefficient vector of $w_h$, see  \eqref{1d-LS}. Thus, the solution $u_h$  can capture the oscillatory  mode $w_h$  or is  ``insensitive'' to perturbation by $c\, w_h$.   
\subsection{Analysis of the SPLS with quadratic test space $V_h$} \label{sec:analysis-quad} 

We consider the model problem \eqref{VF1d}  with the discrete space 
 $\M_h= span\{ \varphi_j\}_{j = 1}^{n-1}$ and $V_h:= span \{ \varphi_j \}_{j = 1}^{n-1} + span \{B_{j}\}_{j = 1}^n$, which coincides with  the standard $C^0-P^2$  on the given uniformly distributed nodes on $[0, 1]$. 
In the this section we use the definition  of the discrete trial norm from  Section \ref{ssec:ON}. 
Note that, in this case, the projection $P_h$  is the projection on the space $V_h=C^0-P^2$. For any piecewise linear function $u_h \in \M_h$ we have that 
\[
Tu_h = x\overline{u}_h - \int_0^xu_h(s)\,ds,
\]
is a continuous piecewise quadratic function. Consequently, $Tu_h \in V_h$, and $P_h\, Tu_h =T u_h$. The optimal discrete norm on $\M_h$ becomes 

\[
\|u_h\|_{*,h}^2 =\varepsilon^2 |u_h|^2 + |Tu_h|^2 =\|u_h\|_{*}^2.
\]
Thus, in this case we can consider the same norm given by 
\[
\|u\|_{*}^2 =\varepsilon^2|u|^2 +\|u-\overline{u}\|^2,
\]
 on both spaces $Q$ and $\M_h$. 
As a consequence of the approximation Theorem \ref{th:sharpEE}, we obtain the following  optimal error estimate:

\begin{theorem}
If $u$ is the solution of \eqref{VF1d}, and $u_h$ the  SPLS solution  for the $(P^1-P^2)$ discretization, then  
$$
\|u - u_h\|_{*}\leq \inf_{p_h\in \M_h}\|u - p_h\|_{*} \leq \|u - u_I\|_{*}.
$$
\end{theorem}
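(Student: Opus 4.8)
The plan is to derive this as a direct corollary of the SPLS error estimate in Theorem \ref{th:sharpEE}, using the special structure established just above the statement: namely that for the $(P^1\text{-}P^2)$ pairing we have $P_h T u_h = T u_h$ for all $u_h \in \M_h$, so that the discrete optimal trial norm $\|\cdot\|_{*,h}$ on $\M_h$ coincides exactly with the continuous optimal trial norm $\|\cdot\|_{*}$, and both are given by $\|u\|_{*}^2 = \varepsilon^2|u|^2 + \|u-\overline u\|^2$.

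First I would recall that, by the definition of the optimal trial norm \eqref{eq:dotn} together with the identity $a_0(T u_h, u_h)=0$ and the Riesz representation theorem, the continuous $\inf$-$\sup$ and $\sup$-$\sup$ constants for $b$ measured in $|\cdot|$ on $V$ and $\|\cdot\|_{*}$ on $Q$ are both equal to $1$, i.e.\ $M = m = 1$. Next I would verify that the discrete $\inf$-$\sup$ constant $m_h$ for the pair $(V_h, \M_h)$, measured in the same norms, is also $1$: indeed for $u_h \in \M_h$ one has $\|u_h\|_{*,h} = \sup_{v_h \in V_h} b(v_h,u_h)/|v_h|$ by \eqref{eq:dotn}, and since $\|u_h\|_{*,h} = \|u_h\|_{*}$ in this case, the quotient $b(v_h,u_h)/(|v_h|\,\|u_h\|_{*})$ has supremum exactly $1$ over $v_h$, hence $m_h = 1$. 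With $M = m_h = 1$, Theorem \ref{th:sharpEE} gives $\|u - u_h\|_{*} \le \inf_{q_h \in \M_h}\|u - q_h\|_{*}$, which is the first inequality. The second inequality is immediate by taking the particular competitor $q_h = u_I$, the piecewise linear nodal interpolant of $u$, which lies in $\M_h$.

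I would phrase the argument so that the key point — the coincidence $\|\cdot\|_{*,h} = \|\cdot\|_{*}$ on $\M_h$, which forces $M = m_h = 1$ — is highlighted, since this is exactly what makes the constant $M/m_h$ in \eqref{eq:er4LS} collapse to $1$ and yields the genuinely $\varepsilon$-independent, quasi-optimal estimate (in contrast with Theorem \ref{th:opt-lin}, where the $P^1$-$P^1$ choice produces $P_h T u_h \neq T u_h$ and the constant $c(h,\varepsilon)$ blows up as $\varepsilon \to 0$).

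The main obstacle, such as it is, is bookkeeping rather than mathematical depth: one must be careful that $u \in Q$ (not merely $u \in \M_h$) so that $\|u - q_h\|_{*}$ on the right-hand side is the continuous norm, and that $u_I \in \M_h$ genuinely satisfies the homogeneous boundary conditions so that it is an admissible element of $\M_h \subset Q = H^1_0(0,1)$; both hold here. One should also note that Theorem \ref{th:sharpEE} requires the compatibility condition \eqref{eq:BBsuf} and that $V_h$ be chosen so that \eqref{inf-sup_h} holds — both of which have been checked above for this pairing — and that the estimate is stated for the SPLS solution $u_h$ (the second component of the solution of \eqref{SPLS4model-h}), which is precisely the $u_h$ appearing in the statement.
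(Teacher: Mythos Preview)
Your proposal is correct and follows exactly the approach the paper takes: the paper presents the theorem simply ``as a consequence of the approximation Theorem~\ref{th:sharpEE}'' after establishing that $P_hTu_h=Tu_h$ forces $\|\cdot\|_{*,h}=\|\cdot\|_{*}$ on $\M_h$, and you have spelled out precisely why this collapses the constant $M/m_h$ in \eqref{eq:er4LS} to $1$. The only difference is level of detail; mathematically the arguments are identical.
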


\subsection{Analysis of the upwind Petrov Galerkin  method}\label{ssec:PGanalysis}

We consider the model problem \eqref{VF1d} together  with the discrete spaces of Section \ref{ssec:PGbubbles}. Thus, we take 
 $\M_h= span\{ \varphi_j\}_{j = 1}^{n-1}$ and $V_h:= span \{ \varphi_j  + B_{j}-B_{j+1}\}_{j = 1}^{n-1}$. 

Using the formula \eqref {eq:bPG} we can find a representation of  the discrete optimal trial norm. We note that  $b(v_h,u_h)$ can be written  independently  of  the bubble part of $v_h$. This allows us to use the characterization of the discrete trial norm using arguments presented in Section \ref{sec:analysis-lin}. We mention here that the projection $P_h$ used  below is the projection on the $C^0-P^1$ discrete space and not on the test space $V_h$.  
\begin{theorem}
For the discrete problem with continuous piecewise linear trial space $\M_h$ and bubble enriched test space $V_h$, the discrete optimal norm  on $\M_h$ is given by  
\[
\|u_h\|_{*,h}^2 = \frac{3}{19}\left(\left(\varepsilon + \frac{2h}{3}\right)^2|u_h|^2 + |P_hTu_h|^2\right)
\]
where    $|P_hTu_h|=|u_h|_{*,h}$ is given by the formula  \eqref{eq:PhTu-Norm}.
\end{theorem}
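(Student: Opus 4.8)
The plan is to compute the discrete optimal trial norm
\[
\|u_h\|_{*,h}^2 = \du{\sup}{v_h \in V_h}{} \ \frac{b(v_h,u_h)^2}{|v_h|^2}
\]
directly, exploiting the two structural facts already established in Section \ref{ssec:PGbubbles}. First, by \eqref{eq:bPG}, for any $v_h = w_h + B_h \in V_h$ with $w_h \in \M_h$ its linear part, we have
\[
b(v_h,u_h) = \left(\varepsilon + \tfrac{2h}{3}\right)(u_h',w_h') + (u_h',w_h),
\]
so the numerator depends on $v_h$ only through its linear part $w_h$. Second, by \eqref{eq:vhwh}, $|v_h|^2 = \tfrac{19}{3}|w_h|^2$. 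Substituting both into the supremum gives
\[
\|u_h\|_{*,h}^2 = \frac{3}{19}\, \du{\sup}{w_h \in \M_h}{} \ \frac{\left(\left(\varepsilon + \frac{2h}{3}\right)(u_h',w_h') + (u_h',w_h)\right)^2}{|w_h|^2},
\]
where the supremum now ranges over the $C^0$-$P^1$ space $\M_h$ (every $w_h \in \M_h$ arises as the linear part of some $v_h \in V_h$, and conversely).

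The remaining supremum is exactly of the form analyzed in Section \ref{sec:analysis-lin}: it is the square of the discrete optimal trial norm of $u_h$ computed with test space $\M_h$ under the bilinear form $(w_h,u_h) \mapsto \left(\varepsilon + \tfrac{2h}{3}\right)(u_h',w_h') + (u_h',w_h)$, which is the original form $b$ with $\varepsilon$ replaced by $\varepsilon + \tfrac{2h}{3}$. I would introduce the operator $T$ and the $C^0$-$P^1$ elliptic projection $P_h$ as in Section \ref{ssec:ON}, write $(u_h',w_h') = a_0(u_h,w_h)$ and $(u_h',w_h) = a_0(Tu_h,w_h) = a_0(P_hTu_h,w_h)$ for $w_h \in \M_h$, and then apply the Riesz representation theorem on $\M_h$ together with the orthogonality $a_0(P_hTu_h,u_h) = a_0(Tu_h,u_h) = 0$. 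Exactly as in the derivation of \eqref{eq:COptNorm-h}, this yields
\[
\du{\sup}{w_h \in \M_h}{} \ \frac{\left(\left(\varepsilon + \frac{2h}{3}\right)a_0(u_h,w_h) + a_0(P_hTu_h,w_h)\right)^2}{|w_h|^2} = \left(\varepsilon + \tfrac{2h}{3}\right)^2|u_h|^2 + |P_hTu_h|^2.
\]
Combining with the factor $\tfrac{3}{19}$ gives the claimed formula, and the identification $|P_hTu_h| = |u_h|_{*,h}$ via \eqref{eq:PhTu-Norm} is immediate since $P_h$ here is precisely the $C^0$-$P^1$ elliptic projection (= interpolant) of Section \ref{sec:analysis-lin}.

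The main obstacle is a bookkeeping one rather than a deep one: I must be careful that the map $v_h \mapsto w_h$ sending a test function to its linear part is a bijection from $V_h = \operatorname{span}\{\varphi_j + B_j - B_{j+1}\}$ onto $\M_h = \operatorname{span}\{\varphi_j\}$, so that taking the supremum over $w_h \in \M_h$ is legitimate and no test directions are lost or double-counted; this follows because in the parametrization $v_h = \sum \beta_i\varphi_i + \sum(\beta_i - \beta_{i-1})B_i$ the bubble coefficients are determined by the $\beta_i$, i.e.\ the linear and bubble parts are not independent in $V_h$. One should also note that the Riesz step is applied with respect to the $a_0$ inner product restricted to $\M_h$, and that $P_hTu_h \in \M_h$ makes the orthogonality $a_0(P_hTu_h, u_h)=0$ meaningful; both are already justified by the argument preceding \eqref{eq:COptNorm-h}. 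Apart from verifying these identifications, the proof is a direct substitution followed by the same Riesz computation used twice already in the paper.
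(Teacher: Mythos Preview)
Your proposal is correct and follows essentially the same route as the paper's proof: both use \eqref{eq:bPG} and \eqref{eq:vhwh} to reduce the supremum over $V_h$ to a supremum over $\M_h$ (picking up the factor $3/19$), then rewrite $(u_h',w_h)=a_0(P_hTu_h,w_h)$ with the $C^0$-$P^1$ projection $P_h$ and apply Riesz on $\M_h$ together with the orthogonality $a_0(P_hTu_h,u_h)=0$. Your explicit check that $v_h\mapsto w_h$ is a bijection $V_h\to\M_h$ is a welcome clarification that the paper leaves implicit.
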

\begin{proof}
Using the definition of $\|u_h\|_{*,h}$  along with the work of  Section \ref{ssec:ON} we can reduce the supremum over $V_h$ to a supremum over $\M_h$. Indeed, 
\begin{align*}
\|u_h\|_{*,h}=\sup_{v_h\in V_h}\frac{b(v_h,u_h)}{|v_h|}& = \sup_{w_h\in \M_h}\frac{b(w_h + B_h,u_h)}{|w_h + B_h|}\\
& = \sup_{w_h\in \M_h}\frac{\left(\varepsilon + \frac{2h}{3}\right)(u_h',w_h') + (u_h',w_h)}{\sqrt{\frac{19}{3}}|w_h|}\\
& = \sup_{w_h\in \M_h}\sqrt{\frac{3}{19}}\frac{\left(\left(\varepsilon + \frac{2h}{3}\right)u_h',w_h'\right) + (Tu_h',w_h')}{|w_h|}\\
& = \sup_{w_h\in \M_h}\sqrt{\frac{3}{19}}\frac{\left(\left(\varepsilon + \frac{2h}{3}\right)u_h',w_h'\right) + (P_hTu_h',w_h')}{|w_h|}\\
& = \sqrt{\frac{3}{19}}\left(\left(\varepsilon + \frac{2h}{3}\right)^2|u_h|^2 + |P_hTu_h|^2\right)^{1/2}
\end{align*}
\end{proof}

\begin{prop}\label{propPG}
The following inequality between  $\|u\|_{*}$ and $\|u\|_{*,h}$ holds on  $Q$.
\begin{equation}\label{eq:PG8Ineq}
\|u\|^2_{*} \leq \frac{19}{3} \|u\|^2_{*,h}.
\end{equation}
\end{prop}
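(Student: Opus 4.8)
The plan is to compare the two explicit formulas already derived for $\|u\|_*^2$ and $\|u\|_{*,h}^2$. Recall from \eqref{eq:COptNorm} that $\|u\|_*^2 = \varepsilon^2 |u|^2 + \|u-\overline u\|^2 = \varepsilon^2|u|^2 + |Tu|^2$, while from the theorem just proved,
\[
\|u\|_{*,h}^2 = \frac{3}{19}\left(\left(\varepsilon + \frac{2h}{3}\right)^2 |u|^2 + |P_h Tu|^2\right).
\]
So \eqref{eq:PG8Ineq} is equivalent to
\[
\varepsilon^2 |u|^2 + |Tu|^2 \le \left(\varepsilon + \frac{2h}{3}\right)^2 |u|^2 + |P_h Tu|^2 .
\]
Since $\varepsilon^2 \le \left(\varepsilon + \frac{2h}{3}\right)^2$ (both quantities nonnegative), it suffices to prove $|Tu|^2 \le |P_h Tu|^2$, i.e.\ that the elliptic projection $P_h$ onto $C^0\text{-}P^1$ does not decrease the $|\cdot|$-seminorm of $Tu$. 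This is \emph{false} in general for an arbitrary projection, so the real content is that $Tu$ is special.

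First I would record that, because in one dimension the elliptic projection onto continuous piecewise linears coincides with the nodal interpolant (cited in Section~\ref{sec:analysis-lin} via \cite{CB2}), we have $P_h Tu = I_h(Tu)$, and $(P_h Tu)' $ is the piecewise-constant function whose value on $[x_{i-1},x_i]$ is $\frac{1}{h}\int_{x_{i-1}}^{x_i}(Tu)'$. The key identity is the Galerkin orthogonality defining $P_h$: $a_0(Tu - P_hTu, v_h)=0$ for all $v_h \in V_h = C^0\text{-}P^1$, which gives the Pythagorean splitting $|Tu|^2 = |P_hTu|^2 + |Tu - P_hTu|^2 \ge |P_hTu|^2$. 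Wait — this inequality goes the \emph{wrong} way: it yields $|P_hTu| \le |Tu|$, not $|Tu| \le |P_hTu|$. So the naive term-by-term comparison does not work, and the proof must instead exploit the cross term structure more carefully: one cannot drop the $|u|^2$ contributions independently.

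The correct approach is therefore to \emph{not} split the inequality. Instead, start from $|Tu|^2 = |P_hTu|^2 + |Tu-P_hTu|^2$ and bound the defect $|Tu - P_hTu|^2$ by the $|u|^2$-slack term $\left[\left(\varepsilon+\tfrac{2h}{3}\right)^2 - \varepsilon^2\right]|u|^2$. Using $(Tu)' = u - \overline u$ (from \eqref{eq:T-Action}) and that $P_hTu - Tu$ vanishes at the nodes, the standard Poincar\'e/approximation estimate on each subinterval gives
\[
|Tu - P_h Tu|^2 = \sum_{i=1}^n \int_{x_{i-1}}^{x_i}\bigl((Tu)' - \overline{(Tu)'}_i\bigr)^2 \le c_p^2 h^2 \sum_{i=1}^n \int_{x_{i-1}}^{x_i} ((Tu)'')^2 = c_p^2 h^2 |u|^2,
\]
since $(Tu)'' = u'$. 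Hence $|Tu|^2 \le |P_hTu|^2 + c_p^2 h^2 |u|^2$, and with $c_p = 1/\pi < 1/\sqrt{2}$ one checks $c_p^2 h^2 \le \tfrac{4h^2}{3} + \tfrac{4\varepsilon h}{3} = \left(\varepsilon+\tfrac{2h}{3}\right)^2 - \varepsilon^2$ (indeed $c_p^2 h^2 \le \tfrac12 h^2 < \tfrac{4}{3}h^2$), which closes the chain of inequalities above and yields \eqref{eq:PG8Ineq}.

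The main obstacle is recognizing that the seemingly natural term-by-term comparison $|Tu|\le |P_hTu|$ is actually backwards, and that one must instead absorb the projection defect $|Tu-P_hTu|^2$ into the gap between $\varepsilon^2$ and $\left(\varepsilon+\tfrac{2h}{3}\right)^2$; after that realization the estimate is a routine Poincar\'e-type bound exactly mirroring the proof of Proposition~\ref{relateOptNorms}. One should double-check the numerical inequality $c_p^2 h^2 \le \left(\varepsilon+\tfrac{2h}{3}\right)^2-\varepsilon^2$ holds with room to spare for all $\varepsilon>0$, which it does since the right-hand side is at least $\tfrac{4}{9}h^2 \cdot 3 = \tfrac{4}{3}h^2$ wait more carefully $\left(\varepsilon+\tfrac{2h}{3}\right)^2 - \varepsilon^2 = \tfrac{4h}{3}\varepsilon + \tfrac{4h^2}{9} \ge \tfrac{4h^2}{9} > \tfrac{h^2}{\pi^2} = c_p^2h^2$ since $\tfrac49 > \tfrac1{\pi^2}$.
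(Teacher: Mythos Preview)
Your argument is correct and is essentially the paper's proof: both establish $\|u\|_*^2 \le (\varepsilon^2 + c_p^2 h^2)|u|^2 + |P_hTu|^2$ via a Poincar\'e estimate and then check $\varepsilon^2 + c_p^2 h^2 \le (\varepsilon + \tfrac{2h}{3})^2$ using $c_p = 1/\pi$. The only difference is that the paper simply invokes the right-hand inequality of Proposition~\ref{relateOptNorms} (which is exactly your bound $|Tu|^2 \le |P_hTu|^2 + c_p^2 h^2|u|^2$ in disguise, since $(Tu)' = \overline u - u$), whereas you re-derive it inline via Galerkin orthogonality and $(Tu)'' = -u'$.
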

\begin{proof}

By using the formula \eqref{eq:COptNorm-h}   and the right side of the inequality \eqref{eq:eqNorms} we have 
\[
\|u\|^2_{*}\leq  \varepsilon^2 |u|^2  +  |P_hTu|^2 +c_p^2 h^2 |u|^2=(\varepsilon^2 + c_p^2 h^2) |u|^2  +  |P_hTu|^2.
\]
To prove \eqref{eq:PG8Ineq}, we just notice that, for $c_p=1/{\pi}$ we have 
\[
(\varepsilon^2 + c_p^2 h^2) |u|^2  +  |P_hTu|^2 \leq  \frac{19}{3} \|u\|^2_{*,h}.
\]

\end{proof}
The following optimal error estimate result is an immediate consequence of Proposition \ref{propPG}.

\begin{theorem}\label{thm:PGError}
If $u$ is the solution of \eqref{VF1d}, and $u_h$ the solution of the upwinding PG formulation, then 
$$
\|u - u_h\|_{*,h}\leq \sqrt{\frac{19}{3}}\inf_{p_h\in \M_h}\|u - p_h\|_{*,h}.
$$
Consequently, 
$$
\|u - u_h\|_{*,h} \leq \varepsilon\mathcal{O}(h) + \mathcal{O}(h^2).
$$
\end{theorem}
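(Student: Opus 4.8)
The plan is to deduce the first inequality directly from the general Petrov--Galerkin approximation bound, Theorem~\ref{th:ap-PG}, and then to obtain the asymptotic estimate by inserting the piecewise linear nodal interpolant into the infimum, exploiting that the discrete optimal trial norm is dominated by the continuous one.

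First I would verify the hypotheses of Theorem~\ref{th:ap-PG} in the present setting: $|\cdot|$ is the $H^1_0$ seminorm on $V$, $\|\cdot\|_*$ is the optimal continuous trial norm on $Q$ given by \eqref{eq:COptNorm}, and $\|\cdot\|_{*,h}$ is the optimal discrete trial norm on $\M_h$ for the pair $(\M_h,V_h)$ of Section~\ref{ssec:PGbubbles}, whose closed form was derived in Section~\ref{ssec:PGanalysis}. Since these are optimal norms, the continuous $\sup$-$\sup$ constant $M$ and the discrete $\inf$-$\sup$ constant $m_h$ are both equal to $1$: by the very definitions $\sup_{v\in V}b(v,u)/|v|=\|u\|_*$ and $\sup_{v_h\in V_h}b(v_h,u_h)/|v_h|=\|u_h\|_{*,h}$, while the closed-form expression for $\|\cdot\|_{*,h}$ shows it is a genuine norm on $\M_h$ (it is bounded below by $\sqrt{3/19}\,(\varepsilon+2h/3)\,|u_h|$), so $m_h=1>0$ and the square system \eqref{PG4model-h} is uniquely solvable. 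The remaining hypothesis, $\|v\|_*\le c_0\|v\|_{*,h}$ for all $v\in Q$, is exactly Proposition~\ref{propPG} with $c_0=\sqrt{19/3}$. Substituting $M=m_h=1$ and $c_0=\sqrt{19/3}$ into \eqref{eq:Approx-Disc} gives the first claimed inequality.

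For the asymptotic bound I would estimate the infimum by the specific choice $p_h=u_I$, the continuous piecewise linear nodal interpolant of $u$ (assuming, as in the comparison with streamline diffusion, that $u\in H^2(0,1)$). Since $V_h\subset V$ and the seminorm $|\cdot|$ is the same, taking the supremum in \eqref{eq:dotn} over $V_h$ rather than over $V$ gives $\|g\|_{*,h}\le\|g\|_*$ for every $g\in Q$; hence, by \eqref{eq:COptNorm},
\[
\|u-u_I\|_{*,h}^2 \le \|u-u_I\|_*^2 = \varepsilon^2|u-u_I|^2 + \|(u-u_I)-\overline{(u-u_I)}\|^2 \le \varepsilon^2|u-u_I|^2 + \|u-u_I\|^2 .
\]
The standard interpolation estimates $|u-u_I|\le c\,h\,\|u''\|$ and $\|u-u_I\|\le c\,h^2\,\|u''\|$ then give $\|u-u_I\|_{*,h}^2\le c^2(\varepsilon^2 h^2+h^4)\|u''\|^2$, so $\|u-u_I\|_{*,h}\le\varepsilon\,\mathcal{O}(h)+\mathcal{O}(h^2)$, and combined with the first inequality this is the stated estimate.

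The first inequality is essentially immediate once Theorem~\ref{th:ap-PG} and Proposition~\ref{propPG} are in hand, and the second reduces to routine interpolation bookkeeping; the one point deserving care is the identification of the abstract constants, i.e.\ confirming that the optimal trial norms are honest norms and not merely seminorms (so that $M=m_h=1$ and \eqref{PG4model-h} is well posed). This is precisely where the coercive term $\sqrt{3/19}\,(\varepsilon+2h/3)\,|u_h|$ in the closed-form expression for $\|\cdot\|_{*,h}$ of Section~\ref{ssec:PGanalysis} enters, and it is what distinguishes the present case from the weaker standard-linear bound of Theorem~\ref{th:opt-lin}.
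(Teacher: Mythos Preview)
Your proof is correct and, for the first inequality, essentially identical to the paper's: both invoke Theorem~\ref{th:ap-PG} with $M=m_h=1$ (from the optimal-norm definitions) and $c_0=\sqrt{19/3}$ from Proposition~\ref{propPG}.

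For the asymptotic estimate your route differs slightly from the paper's, and in a way that is arguably cleaner. The paper works with the explicit closed form of $\|\cdot\|_{*,h}$ from Section~\ref{ssec:PGanalysis}, obtaining $\tfrac{19}{3}\|u-u_I\|_{*,h}^2\le(\varepsilon+2h/3)^2|u-u_I|^2+|P_hT(u-u_I)|^2$, then bounds $|P_hT(\cdot)|\le\|\cdot\|$ via \eqref{weekN} and finally splits $(\varepsilon+2h/3)^2\le 2(\varepsilon^2+h^2)$ to separate the $\varepsilon$- and $h$-contributions. You instead bypass the discrete closed form entirely by using the elementary inclusion bound $\|\cdot\|_{*,h}\le\|\cdot\|_*$ (supremum over $V_h\subset V$) and the continuous formula \eqref{eq:COptNorm}, which already has the $\varepsilon^2|u-u_I|^2$ term isolated. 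Both arguments land on $\varepsilon^2|u-u_I|^2+\|u-u_I\|^2$ and finish with the same standard interpolation estimates; your version just avoids the extra algebraic splitting and the appeal to \eqref{weekN}.
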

\begin{proof}
The first estimate is a direct consequence of the approximation \\ Theorem \ref{th:ap-PG} and the Proposition \ref{propPG}.
For the second estimate, we note that
\[
\begin{aligned}
\|u - u_h\|^2_{*,h} & \leq  {\frac{19}{3}} \inf_{p_h\in \M_h}\|u - p_h\|^2_{*,h}\leq  {\frac{19}{3}}\|u - u_I\|^2_{*,h}\\
& \leq (\varepsilon + {2h}/{3})^2|u-u_I|^2 + |P_hT(u-u_I)|^2
\end{aligned}
\]
where $u_I$ is the linear interpolant of $u$. 
Using \eqref{weekN}, we have that 
\[
|P_hT(u-u_I)|^2 \leq \|u-u_I\|^2,
\]
which leads to 
\[
\|u - u_h\|^2_{*,h}  \leq  (\varepsilon + {2h}/{3})^2|u-u_I|^2 + \|u-u_I\|^2.
\]

For  $u \in H^2(0,1)$,  we have $ \|u-u_I\|_{L^2} = \mathcal{O}(h^2)$, and $|u-u_I|= \mathcal{O}(h)$, and using 
$
(\varepsilon + \frac{2h}{3})^2\leq 2(\varepsilon^2 + h^2),
$
we obtain  
\[
\begin{aligned}
(\varepsilon + {2h}/{3})^2|u-u_I|^2 + \|u-u_I\|^2 & 
\leq 2\left(\varepsilon^2|u-u_I|^2 + h^2|u-u_I|^2\right) +  \mathcal{O}(h^4)\\
& \leq  \varepsilon^2\mathcal{O}(h^2) + \mathcal{O}(h^4),
\end{aligned}
\]
which proves the required estimate. 
\end{proof}
As a consequence of the theorem, we have
\[
(\varepsilon + {2h}/{3})\, |u-u_h| + |P_hT(u-u_h)| \leq  \varepsilon\mathcal{O}(h) + \mathcal{O}(h^2).
\]
We can compare this estimate  with the  error estimate for the SD method, \eqref{eq:EE-sd} : 
\[
(\varepsilon + {2h}/{3})^{1/2} \,  |u-u_h| \leq   \mathcal{O}(h^{3/2}).
\]
Note that for $\varepsilon < < h$ the PG estimate for $|u-u_h|$ is slightly better than the SD estimate, and for   $\varepsilon \approx h$, both estimates lead  to $|u-u_h|  \leq \mathcal{O}(h)$. In addition,  
the PG estimate provides further information about $|P_hT(u-u_h)|$ which, according to \eqref{weekN}, is slightly weaker than the $L^2$ norm  $\|u-u_h\|$. 


\section{Numerical experiments}\label{sec:NR} 

We  compared numerically the standard linear finite element with the $(P^1-P^2)$-SPLS formulation, and the Streamline Diffusion with the upwinding Petrov-Galerkin in a variety of norms.  For the data tables,  we will use the notation $E_{j,method}$ where   $j = 0$  represents  the $L^2$ error $||u-u_h||$, and $j=1$  represents the $H^1$ error $|u-u_h|$. For  {\it method}, we use the following: $L$ for standard piecewise linear approximation; $S$ for the SPLS method; and $SD$ for the Streamline Diffusion method, and $P$ for the  Petrov-Galerkin method. 

\subsection{Standard linear  versus SPLS discretization}\label{ssec:NRlin} 
For the first test, we take $f = 1-2x$ which satisfies the condition $\overline{f}=0$. In this case, we  compare the standard linear finite element method and the SPLS formulation  for two values of $\varepsilon$ that are at least 2 orders of magnitude smaller than $h$ at the finest level. Table \ref{table:Test1} contains the errors of the two methods over six refinements where $h_i = 2^{-i-5}$, for $i=1,2,3,4,5,6$. We can see that for this problem, both methods perform well, however at all levels, for both values of $\varepsilon$, SPLS produces smaller errors. 

\begin{table}[h!] 
\begin{center} 
\begin{tabular}{|*{5}{c|}} 
\hline 
\multicolumn{1}{|c|}{\multirow{2}{*}{\parbox{1.2cm}{\centering Level/$\varepsilon$}}}& \multicolumn{4}{c|}{$10^{-6}$}\\
\cline{2-5} 
&$E_{1,L}$ &$E_{1,S}$&$E_{0,L}$& $E_{0,S}$\\ \hline 
1&0.289&0.144&0.046&0.011\\ 
\hline 
2&0.144&0.072&0.011&0.003\\ 
\hline 
3&0.072&0.036&0.003&0.001\\ 
\hline 
4&0.036&0.018&0.001&1.8e-4\\ 
\hline 
5&0.018&0.009&1.7e-4&4.4e-5\\ 
\hline 
6&0.009&0.005&4.4e-5&1.0e-5\\ 
\hline 
Order & 1& 1& 2& 2\\
\hline
\multicolumn{1}{|c|}{\multirow{2}{*}{\parbox{1.2cm}{\centering Level/$\varepsilon$}}}& \multicolumn{4}{c|}{$10^{-10}$}\\
\cline{2-5} 
&$E_{1,L}$ &$E_{1,S}$&$E_{0,L}$& $E_{0,S}$\\ \hline 
1&0.289&0.144&0.046&0.011\\ 
\hline 
2&0.144&0.072&0.011&0.003\\ 
\hline 
3&0.072&0.036&0.003&0.001\\ 
\hline 
4&0.036&0.018&0.001&1.8e-4\\ 
\hline 
5&0.018&0.009&1.8e-4&4.5e-5\\ 
\hline 
6&0.009&0.005&4.5e-5&1.1e-5\\ 
\hline 
Order &1 &1 & 2& 2\\
\hline
\end{tabular} 
\caption{L vs. SPLS: $f(x) = 1-2x$} 
\label{table:Test1}
\end{center} 
\end{table}

Table \ref{table:Test1.5} contains errors for standard linear finite elements and SPLS for $f(x) = 2x$. As this choice of right hand side does not satisfy the condition that $\overline{f} = 0$, we can expect the results to be less impressive than those of Table \ref{table:Test1}. In Table \ref{table:Test1.5}, we can see that the magnitude of the errors for the two methods is drastically different as $\varepsilon$ decreases. The SPLS method does a better job at capturing the general behavior of the true solution than standard linear finite elements. 

The plots of the approximations are shown at the sixth level of refinement with $\varepsilon = 10^{-6}$ in Figure \ref{fig:LinearvsSPLS}. It can be seen in the plots that the non-physical oscillations of the standard linear elements is present, whereas the SPLS approximation captures the behavior of the true solution, with a shift of the approximation by a constant, invisible for the seminorm part of the optimal discrete norm. Ways on how to modify the discrete spaces in order to eliminate the constant shift for the SPLS approach, remain to be investigated.

\begin{table}[h!] 
\begin{center} 
\begin{tabular}{|*{5}{c|}} 
\hline 
\multicolumn{1}{|c|}{\multirow{2}{*}{\parbox{1.2cm}{\centering Level/$\varepsilon$}}}& \multicolumn{4}{c|}{$10^{-6}$}\\
\cline{2-5} 
&$||u-u_{h,L}||_{*,h}$&Order&$||u-u_{h,S}||_{*,h}$&Order\\ 
\hline 
1&3.52e+01&0.00&4.75e-02&0.00\\ 
\hline 
2&8.81e+00&2.00&3.36e-02&0.50\\ 
\hline 
3&2.21e+00&2.00&2.37e-02&0.50\\ 
\hline 
4&5.75e-01&1.94&1.68e-02&0.50\\ 
\hline 
5&2.02e-01&1.51&1.19e-02&0.50\\ 
\hline 
6&9.97e-02&1.02&8.40e-03&0.50\\ 
\hline 
\multicolumn{1}{|c|}{\multirow{2}{*}{\parbox{1.2cm}{\centering Level/$\varepsilon$}}}& \multicolumn{4}{c|}{$10^{-10}$}\\
\cline{2-5} 
&$||u-u_{h,L}||_{*,h}$&Order&$||u-u_{h,S}||_{*,h}$&Order\\ 
\hline 
1&3.52e+05&0.00&4.75e-02&0.00\\ 
\hline 
2&8.81e+04&2.00&3.36e-02&0.50\\ 
\hline 
3&2.20e+04&2.00&2.37e-02&0.50\\ 
\hline 
4&5.51e+03&2.00&1.68e-02&0.50\\ 
\hline 
5&1.38e+03&2.00&1.19e-02&0.50\\ 
\hline 
6&3.44e+02&2.00&8.40e-03&0.50\\ 
\hline 
\end{tabular} 
\caption{L vs. SPLS: $f(x) = 2x$} 
\label{table:Test1.5}
\end{center} 
\end{table}

\begin{figure}[h!]
\begin{center}
\includegraphics[width=0.5\textwidth]{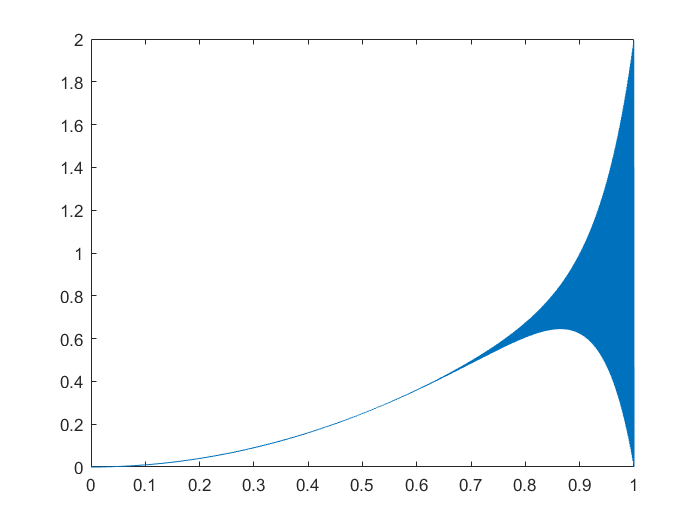}\includegraphics[width=0.5\textwidth]{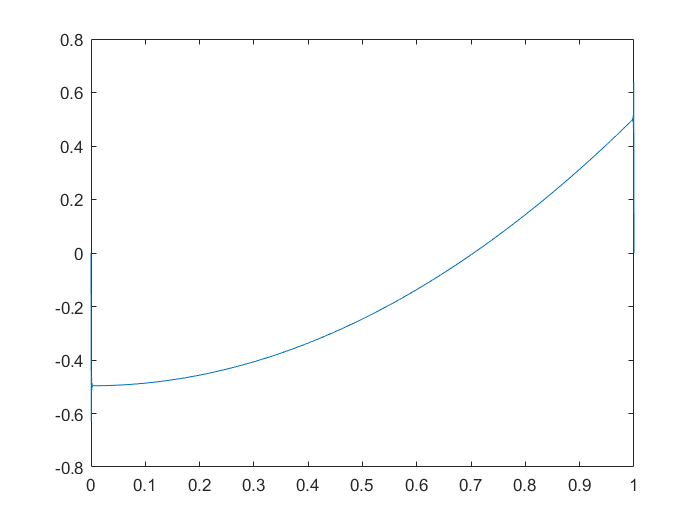}
\end{center}
\caption{ $\varepsilon = 10^{-6}$. Left: Linear, Right: SPLS }
\label{fig:LinearvsSPLS}
\end{figure}

\subsection{Streamline Diffussion  versus PG discretization}


For the second test, we take $f = 2x$ and compare Streamline Diffusion and Petrov-Galerkin. In this case, the exact solution will have a boundary layer at $x = 1$ of width $|\varepsilon\log(\varepsilon)|$. We include two tables for this test where the error is measured only on a subdomain of $[0,1]$ excluding 1\% of the nodes near the right boundary. Table \ref{table:Test2} compares the errors of the SD approximation $u_{h,sd}$ with the PG approximation $u_{h,pg}$ in the SD norm $||u-u_h||_{sd}$. As we can see in Table \ref{table:Test2}, the expected order for SD is observed. Further, the same order is attained by PG, with errors of smaller magnitude. 

\begin{table}[h!] 
\begin{center} 
\begin{tabular}{|*{5}{c|}} 
\hline 
\multicolumn{1}{|c|}{\multirow{2}{*}{\parbox{1.2cm}{\centering Level/$\varepsilon$}}}& \multicolumn{4}{c|}{$10^{-4}$}\\
\cline{2-5} 
&$||u-u_{h,sd}||_{sd}$ &Order&$||u-u_{h,pg}||_{sd}$&Order\\ 
\hline 
1&1.56e-02&-&1.54e-02&-\\ 
\hline 
2&2.57e-03&2.60&2.51e-03&2.62\\ 
\hline 
3&5.45e-04&2.24&4.92e-04&2.35\\ 
\hline 
4&1.05e-04&2.37&4.21e-05&3.55\\ 
\hline 
5&3.86e-05&1.45&1.54e-05&1.46\\ 
\hline 
6&1.45e-05&1.41&5.78e-06&1.41\\ 
\hline 
\multicolumn{1}{|c|}{\multirow{2}{*}{\parbox{1.2cm}{\centering Level/$\varepsilon$}}}& \multicolumn{4}{c|}{$10^{-8}$}\\
\cline{2-5} 
&$||u-u_{h,sd}||_{sd}$ &Order&$||u-u_{h,pg}||_{sd}$&Order\\ 
\hline 
1&1.46e-02&-&1.45e-02&-\\ 
\hline 
2&2.16e-03&2.76&2.09e-03&2.79\\ 
\hline 
3&3.98e-04&2.44&3.16e-04&2.72\\ 
\hline 
4&1.01e-04&1.97&4.04e-05&2.97\\ 
\hline 
5&3.60e-05&1.50&1.43e-05&1.50\\ 
\hline 
6&1.27e-05&1.50&5.06e-06&1.50\\ 
\hline 
\end{tabular} 
\caption{SD vs. PG: $f(x) = 2x$}
\label{table:Test2}
\end{center} 
\end{table} 

In Table \ref{table:Test2} and Table \ref{table:Test3},  the SD and PG approximations are  compared in the optimal norm $||u-u_h||_{*,h}$ for $f(x) = 2x$. The results are interesting as not only does the PG approximation have significantly smaller errors, but also it attains higher order of approximation. In this case ($\varepsilon<h$), in accordance with Theorem \ref{thm:PGError}, $\mathcal{O}(h^2)$ is obtained  for the PG method. 
For the streamline diffusion approximation, the optimal norm only appears to achieve order one, see Table \ref{table:Test3}.

\begin{table}[h!] 
\begin{center} 
\begin{tabular}{|*{5}{c|}} 
\hline 
\multicolumn{1}{|c|}{\multirow{2}{*}{\parbox{1.2cm}{\centering Level/$\varepsilon$}}}& \multicolumn{4}{c|}{$10^{-4}$}\\
\cline{2-5} 
&$||u-u_{h,sd}||_{*,h}$ &Order&$||u-u_{h,pg}||_{*,h}$&Order\\ 
\hline 
1&5.47e-03&0.00&2.32e-03&0.00\\ 
\hline 
2&2.75e-03&0.99&2.68e-04&3.11\\ 
\hline 
3&1.42e-03&0.95&3.71e-05&2.85\\ 
\hline 
4&7.19e-04&0.99&1.61e-06&4.53\\ 
\hline 
5&3.62e-04&0.99&4.27e-07&1.92\\ 
\hline 
6&1.82e-04&0.99&1.21e-07&1.82\\ 
\hline 
\multicolumn{1}{|c|}{\multirow{2}{*}{\parbox{1.2cm}{\centering Level/$\varepsilon$}}}& \multicolumn{4}{c|}{$10^{-8}$}\\
\cline{2-5} 
&$||u-u_{h,sd}||_{*,h}$ &Order&$||u-u_{h,pg}||_{*,h}$&Order\\ 
\hline 
1&5.43e-03&0.00&2.17e-03&0.00\\ 
\hline 
2&2.76e-03&0.98&2.21e-04&3.30\\ 
\hline 
3&1.43e-03&0.95&2.30e-05&3.26\\ 
\hline 
4&7.20e-04&0.99&1.48e-06&3.95\\ 
\hline 
5&3.62e-04&0.99&3.71e-07&2.00\\ 
\hline 
6&1.82e-04&0.99&9.29e-08&2.00\\ 
\hline 
\end{tabular} 
\caption{SD vs. PG:$ f(x) = 2x$}
\label{table:Test3} 
\end{center} 
\end{table}

\begin{table}[h!] 
\begin{center} 
\begin{tabular}{|*{5}{c|}} 
\hline 
\multicolumn{1}{|c|}{\multirow{2}{*}{\parbox{1.2cm}{\centering Level/$\varepsilon$}}}& \multicolumn{4}{c|}{$10^{-4}$}\\
\cline{2-5} 
&$\|u-u_{h,sd}\|_{B}$ &Order&$\|u-u_{h,pg}\|_{B}$&Order\\ 
\hline 
1&1.12e-02&-&2.33e-03&0.00\\ 
\hline 
2&5.74e-03&0.96&2.69e-04&3.12\\ 
\hline 
3&2.92e-03&0.97&3.71e-05&2.85\\ 
\hline 
4&1.47e-03&0.99&1.73e-06&4.43\\ 
\hline 
5&7.38e-04&0.99&4.55e-07&1.92\\ 
\hline 
6&3.70e-04&1.00&1.27e-07&1.84\\ 
\hline 
\multicolumn{1}{|c|}{\multirow{2}{*}{\parbox{1.2cm}{\centering Level/$\varepsilon$}}}& \multicolumn{4}{c|}{$10^{-8}$}\\
\cline{2-5} 
&$\|u-u_{h,sd}\|_{B}$ &Order&$\|u-u_{h,pg}\|_{B}$&Order\\ 
\hline 
1&1.12e-02&-&2.18e-03&0.00\\ 
\hline 
2&5.74e-03&0.96&2.21e-04&3.30\\ 
\hline 
3&2.93e-03&0.97&2.30e-05&3.26\\ 
\hline 
4&1.47e-03&0.99&1.61e-06&3.83\\ 
\hline 
5&7.38e-04&0.99&4.04e-07&2.00\\ 
\hline 
6&3.70e-04&1.00&1.01e-07&2.00\\ 
\hline 
\end{tabular} 
\caption{SD vs. PG:$ f(x) = 2x$}
\label{table:Test4} 
\end{center} 
\end{table}

As reflected by the numerical result presented in Table \ref{table:Test4}, 
the computations using  a mixed balanced norm
 \[
 \|u\|_{B}^2 = (\epsilon+\delta)^2 |u|^2 + \|u\|^2,     
\]
where   $\delta = \frac{2h}{3} $  is parameter from the optimal norm, show   $\mathcal{O}(h)$
order of approximation  for the Streamline Difussion method, and   $\mathcal{O}(h^2)$ for the upwinding  Petrov Galerkin  method. 



\section{Conclusion}\label{sec:conclusion} 
We analyzed and compared four discretization methods for a model\\ convection-diffusion  problem. A unified error analysis was possible because  of our  representation of the optimal norm on the trial spaces. The ideas presented in this paper are  stepping stones for introducing and analyzing new and  efficient discretizations for the multi-dimensional cases of convection dominated problems. 

 Our finite element  analysis  for the considered  model problem showed  that the best method is the upwinding PG method. When compared with  the standard SUPG method, we observed that the upwinding PG method can provide higher order of approximation in particular  norms. In addition, we  proved  that, due to reformulation as a mixed conforming method,  the upwinding PG method leads to  stability and  good approximability  results under less regularity assumptions for the solution.  
 
For the $(P^1-P^2)$-SPLS method, we proved that the seminorm part of the optimal trial norm becomes a norm  which makes the error analysis much simpler. 
 
 Results on  generalizing  the upwinding PG and  SPLS discretizations  for singularly perturbed problems on two or more dimensions are to be discussed in a separate publication. 
 

\begin{thebibliography}{10}

\bibitem{A-B}
A.~Aziz and I.~Babu\v{s}ka.
\newblock Survey lectures on mathematical foundations of the finite element
  method.
\newblock {\em The Mathematical Foundations of the Finite Element Method with
  Applications to Partial Differential Equations, A. Aziz, editor}, 1972.

\bibitem{B09}
C.~Bacuta.
\newblock Schur complements on {H}ilbert spaces and saddle point systems.
\newblock {\em J. Comput. Appl. Math.}, 225(2):581--593, 2009.

\bibitem{BHJ}
C.~Bacuta, D.~Hayes, and J.~Jacavage.
\newblock Notes on a saddle point reformulation of mixed variational problems.
\newblock {\em Comput. Math. Appl.}, 95:4--18, 2021.

\bibitem{BHJ22}
C.~Bacuta, D.~Hayes, and J.~Jacavage.
\newblock Efficient discretization and preconditioning of the singularly
  perturbed reaction-diffusion problem.
\newblock {\em Comput. Math. Appl.}, 109:270--279, 2022.

\bibitem{BJ-nc}
C.~Bacuta and J.~Jacavage.
\newblock A non-conforming saddle point least squares approach for an elliptic
  interface problem.
\newblock {\em Comput. Methods Appl. Math.}, 19(3):399--414, 2019.

\bibitem{BJprec}
C.~Bacuta and J.~Jacavage.
\newblock Saddle point least squares preconditioning of mixed methods.
\newblock {\em Computers \& Mathematics with Applications}, 77(5):1396--1407,
  2019.

\bibitem{BJ-AA}
C.~Bacuta and J.~Jacavage.
\newblock Least squares preconditioning for mixed methods with nonconforming
  trial spaces.
\newblock {\em Applicable Analysis}, Available online Feb 27, 2019:1--20, 2020.

\bibitem{BM12}
C.~Bacuta and P.~Monk.
\newblock Multilevel discretization of symmetric saddle point systems without
  the discrete {LBB} condition.
\newblock {\em Appl. Numer. Math.}, 62(6):667--681, 2012.

\bibitem{BQ15}
C.~Bacuta and K.~Qirko.
\newblock A saddle point least squares approach to mixed methods.
\newblock {\em Comput. Math. Appl.}, 70(12):2920--2932, 2015.

\bibitem{BQ17}
C.~Bacuta and K.~Qirko.
\newblock A saddle point least squares approach for primal mixed formulations
  of second order {PDE}s.
\newblock {\em Comput. Math. Appl.}, 73(2):173--186, 2017.

\bibitem{CB2}
Cr. Bacuta and C.~Bacuta.
\newblock Connections between finite difference and finite element
  approximations.
\newblock {\em Appl. Anal.}, 102(6):1808--1820, 2023.

\bibitem{bartels16}
S.~Bartels.
\newblock {\em Numerical approximation of partial differential equations},
  volume~64 of {\em Texts in Applied Mathematics}.
\newblock Springer, [Cham], 2016.

\bibitem{boffi-brezzi-demkowicz-duran-falk-fortin2006}
D.~Boffi, F.~Brezzi, L.~Demkowicz, R.~G. Dur{\'a}n, R.~Falk, and M.~Fortin.
\newblock {\em Mixed finite elements, compatibility conditions, and
  applications}, volume 1939 of {\em Lecture Notes in Mathematics}.
\newblock Springer-Verlag, Berlin; Fondazione C.I.M.E., Florence, 2008.
\newblock Lectures given at the C.I.M.E. Summer School held in Cetraro, June
  26--July 1, 2006, Edited by Boffi and Lucia Gastaldi.

\bibitem{boffi-brezzi-fortin}
D.~Boffi, F.~Brezzi, and M.~Fortin.
\newblock {\em Mixed finite element methods and applications}, volume~44 of
  {\em Springer Series in Computational Mathematics}.
\newblock Springer, Heidelberg, 2013.

\bibitem{bouma-gopalakrisnan-harb2014}
T.~Bouma, J.~Gopalakrishnan, and A.~Harb.
\newblock Convergence rates of the {DPG} method with reduced test space degree.
\newblock {\em Comput. Math. Appl.}, 68(11):1550--1561, 2014.

\bibitem{brezzi-marini-russo98}
F.~Brezzi, D.~Marini, and A.~Russo.
\newblock Applications of the pseudo residual-free bubbles to the stabilization
  of convection-diffusion problems.
\newblock {\em Comput. Methods Appl. Mech. Engrg.}, 166(1-2):51--63, 1998.

\bibitem{Broersen-StevensonDPGcd14}
D.~Broersen and R.~Stevenson.
\newblock A robust {P}etrov-{G}alerkin discretisation of convection-diffusion
  equations.
\newblock {\em Comput. Math. Appl.}, 68(11):1605--1618, 2014.

\bibitem{car-dem-gop16}
L.~Demkowicz C.~Carstensen and J.~Gopalakrishnan.
\newblock Breaking spaces and form for the {DPG} method and applications
  including maxwell equations.
\newblock {\em Computers and Mathematics with Applications}, 72:494--522, 2016.

\bibitem{Chan-Heuer-Bui-Demkowicz14}
J.~Chan, N.~Heuer, T.~Bui-Thanh, and L.~Demkowicz.
\newblock A robust {DPG} method for convection-dominated diffusion problems
  {II}: adjoint boundary conditions and mesh-dependent test norms.
\newblock {\em Comput. Math. Appl.}, 67(4):771--795, 2014.

\bibitem{chen-xu08}
L.~Chen and J.~Xu.
\newblock Stability and accuracy of adapted finite element methods for
  singularly perturbed problems.
\newblock {\em Numer. Math.}, 109(2):167--191, 2008.

\bibitem{Dahmen-Welper-Cohen12}
A.~Cohen, W.~Dahmen, and G.~Welper.
\newblock Adaptivity and variational stabilization for convection-diffusion
  equations.
\newblock {\em ESAIM Math. Model. Numer. Anal.}, 46(5):1247--1273, 2012.

\bibitem{dem-fuh-heu-tia19}
L.~Demkowicz, T.~F\"{u}hrer, N.~Heuer, and X.~Tian.
\newblock The double adaptivity paradigm (how to circumvent the discrete
  inf-sup conditions of {B}abu\v{s}ka and {B}rezzi).
\newblock Technical report, 2021.

\bibitem{demkowicz-gopalakrishnanDPG10}
L.~Demkowicz and J.~Gopalakrishnan.
\newblock A class of discontinuous {P}etrov-{G}alerkin methods. {P}art {I}: the
  transport equation.
\newblock {\em Comput. Methods Appl. Mech. Engrg.}, 199(23-24):1558--1572,
  2010.

\bibitem{demkowicz2011class}
L.~Demkowicz and J.~Gopalakrishnan.
\newblock A class of discontinuous {P}etrov--{G}alerkin methods. ii. optimal
  test functions.
\newblock {\em Numerical Methods for Partial Differential Equations},
  27(1):70--105, 2011.

\bibitem{demkowicz}
L.~Demkowicz and L.~Vardapetyan.
\newblock Modelling electromagnetic/scattering problems using hp-adaptive
  finite element methods.
\newblock {\em Comput, Methods Appl. Mech. Engrg. Numerical Mathematics},
  152:103 -- 124, 1998.

\bibitem{J5onDPG}
J.~Gopalakrishnan.
\newblock Five lectures on {DPG} methods.
\newblock {\em arXiv 1306.0557}, 2013.

\bibitem{hughes-brooks79}
T.~J.~R. Hughes and A.~Brooks.
\newblock A multidimensional upwind scheme with no crosswind diffusion.
\newblock In {\em Finite element methods for convection dominated flows
  ({P}apers, {W}inter {A}nn. {M}eeting {A}mer. {S}oc. {M}ech. {E}ngrs., {N}ew
  {Y}ork, 1979)}, volume~34 of {\em AMD}, pages 19--35. Amer. Soc. Mech. Engrs.
  (ASME), New York, 1979.

\bibitem{Barrett-Morton81}
K.~W.~Morton J.~W.~Barrett, and.
\newblock Optimal {P}etrov-{G}alerkin methods through approximate
  symmetrization.
\newblock {\em IMA J. Numer. Anal.}, 1(4):439--468, 1981.

\bibitem{kato}
T.~Kato.
\newblock Estimation of iterated matrices, with application to the {V}on
  {N}eumann condition.
\newblock {\em Numer. Math.}, 2:22--29, 1960.

\bibitem{Lin-Stynes12}
R.~Lin and M.~Stynes.
\newblock A balanced finite element method for singularly perturbed
  reaction-diffusion problems.
\newblock {\em SIAM Journal on Numerical Analysis}, 50(5):2729--2743, 2012.

\bibitem{linssT10}
T.~Lin\ss.
\newblock {\em Layer-adapted meshes for reaction-convection-diffusion
  problems}, volume 1985 of {\em Lecture Notes in Mathematics}.
\newblock Springer-Verlag, Berlin, 2010.

\bibitem{quarteroni-sacco-saleri07}
Alfio Quarteroni, Riccardo Sacco, and Fausto Saleri.
\newblock {\em Numerical mathematics}, volume~37 of {\em Texts in Applied
  Mathematics}.
\newblock Springer-Verlag, Berlin, second edition, 2007.

\bibitem{roos-stynes-tobiska-96}
H.-G. Roos, M.~Stynes, and L.~Tobiska.
\newblock {\em Numerical methods for singularly perturbed differential
  equations}, volume~24 of {\em Springer Series in Computational Mathematics}.
\newblock Springer-Verlag, Berlin, 1996.
\newblock Convection-diffusion and flow problems.

\bibitem{Roos-Schopf15}
H.G. Roos and M.~Schopf.
\newblock Convergence and stability in balanced norms of finite element methods
  on shishkin meshes for reaction-diffusion problems: Convergence and stability
  in balanced norms.
\newblock {\em ZAMM Journal of applied mathematics and mechanics: Zeitschrift
  f{\"u}r angewandte Mathematik und Mechanik}, 95(6):551--565, 2014.

\bibitem{xu-zikatanov-BBtheory}
J.~Xu and L.~Zikatanov.
\newblock Some observations on {B}abu\v ska and {B}rezzi theories.
\newblock {\em Numer. Math.}, 94(1):195--202, 2003.

\end{thebibliography}


 
  \bibliographystyle{plain} 
 \def\cprime{$'$} \def\ocirc#1{\ifmmode\setbox0=\hbox{$#1$}\dimen0=\ht0
  \advance\dimen0 by1pt\rlap{\hbox to\wd0{\hss\raise\dimen0
  \hbox{\hskip.2em$\scriptscriptstyle\circ$}\hss}}#1\else {\accent"17 #1}\fi}
  \def\cprime{$'$} \def\ocirc#1{\ifmmode\setbox0=\hbox{$#1$}\dimen0=\ht0
  \advance\dimen0 by1pt\rlap{\hbox to\wd0{\hss\raise\dimen0
  \hbox{\hskip.2em$\scriptscriptstyle\circ$}\hss}}#1\else {\accent"17 #1}\fi}

 \end{document}